\declaretheorem[name=Theorem]{theorem}
\declaretheorem[name=Lemma, sibling=theorem]{lemma}
\declaretheorem[name=Conjecture, sibling=theorem]{conjecture}
\declaretheorem[name=Hypothesis]{hypothesis}
\declaretheorem[name=Problem, sibling=theorem]{problem}
\declaretheorem[name=Remark, style=remark]{remark}
\def\cqedsymbol{\ifmmode$\lrcorner$\else{\unskip\nobreak\hfil
\penalty50\hskip1em\null\nobreak\hfil$\lrcorner$
\parfillskip=0pt\finalhyphendemerits=0\endgraf}\fi} 
\let\le\leqslant
\let\ge\geqslant
\let\geq\geqslant
\title{Binary matroids and degree-boundedness for pivot-minors}
\author[1]{Rutger Campbell}
\author[2]{James Davies}
\author[3]{Robert Hickingbotham}
\affil[1]{University of Waterloo, Canada}
\affil[2]{Leipzig University, Germany}
\affil[3]{Université libre de Bruxelles, Belgium}
\date{}
\begin{document}

\maketitle

\begin{abstract}
    We prove that for every bipartite graph $H$ and positive integer $s$, the class of $K_{s,s}$-subgraph-free graphs excluding $H$ as a pivot-minor has bounded average degree.
    Our proof relies on the announced structure theorem of Geelen, Gerards, and Whittle for highly connected binary matroids.
    
    Along the way, we also prove that every $K_{s,t}$-free bipartite circle graph with $s\le t$ has a vertex of degree at most $\max\{2s-2, t-1\}$ and provide examples showing that this is tight.
\end{abstract}

\section{Introduction}

In this paper, we study degree-boundedness of graph classes defined by a forbidden pivot-minor.
A graph $H$ is a \emph{pivot-minor} of another graph $G$ if it is obtainable from $G$ by a sequence of pivots and vertex deletions; see \cref{sec:binary} for the formal definition. For a graph $H$, we say a graph is \emph{$H$-free} if it does not contain a subgraph isomorphic to $H$. Our main result is the following.

\begin{theorem}\label{main}
    For every bipartite graph $H$, there exists a function $f_H$ such that for every positive integer $s$, every $K_{s,s}$-free graph excluding $H$ as pivot-minor has average degree at most $f_H(s)$.
\end{theorem}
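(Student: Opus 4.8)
The plan is to translate the problem into binary matroids and apply the announced Geelen--Gerards--Whittle structure theorem. The first step is to reduce to the case that $G$ is bipartite, and this requires a genuine argument: one cannot simply pass to an induced bipartite subgraph (dense $K_{s,s}$-free graphs such as polarity graphs have no dense induced bipartite subgraph), and since pivoting can create a $K_{s,s}$, one must pivot with care---e.g.\ only on edges with suitably sparse neighbourhoods---so as to extract a $K_{s',s'}$-free bipartite pivot-minor whose average degree is still unbounded as a function of the average degree of $G$. Such a bipartite pivot-minor inherits the exclusion of $H$.

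Once $G$ is bipartite with parts $(A,B)$, let $M_G$ be the binary matroid represented over $\mathrm{GF}(2)$ by $[\,I_A \mid N\,]$, where $N$ is the $A \times B$ bipartite adjacency matrix. Then $G$ is a fundamental graph of $M_G$; pivots correspond to basis exchanges and vertex deletions to deletions/contractions, so for bipartite $H$ the graph $G$ has $H$ as a pivot-minor if and only if $M_G$ has $M_H$ as a minor. Moreover $|E(G)|$ is the number of $1$'s of $N$, which is $|E(M_G)|$ minus the size of a basis, so bounding the average degree of $G$ amounts to bounding $|E(G)|$ linearly in $|E(M_G)|$. Thus it suffices to prove: for each binary matroid $N_0$ and integer $s$, if $M$ has no $N_0$-minor and some fundamental graph $G$ of $M$ is $K_{s,s}$-free, then $G$ has bounded average degree. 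By the structure theorem, $M$ has a bounded-adhesion tree-decomposition whose torsos are, up to a bounded-rank perturbation, of bounded branch-width, or graphic, or cographic; I would bound the edges of $G$ contributed by each torso and then check that assembling the torsos along a bounded-adhesion tree-decomposition inflates the edge count of the fundamental graph by only a bounded amount per torso.

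For a bounded-branch-width torso, its fundamental graph has bounded rank-width, hence bounded twin-width; since $K_{s,s}$-free graphs of bounded twin-width have a linear number of edges, that case is done. Graphic and cographic torsos yield the same fundamental graphs: the fundamental graph of $M(J)$ with respect to a spanning tree $T$ is the incidence graph between $E(T)$ and the family $\mathcal{P} = \{P_f : f \notin T\}$ of fundamental $T$-paths, and a $K_{s,s}$-subgraph corresponds to $s$ members of $\mathcal{P}$ sharing a common subpath with $s$ edges. So this case reduces to the combinatorial claim that if no $s$ paths of a family $\mathcal{P}$ in a tree $T$ share a common subpath with $s$ edges, then $\sum_{P \in \mathcal{P}} |E(P)| \le c_s\bigl(|E(T)| + |\mathcal{P}|\bigr)$; the advertised degeneracy bound for $K_{s,t}$-free bipartite circle graphs---the fundamental graphs of \emph{planar} matroids---is the sharp special case of this phenomenon, with the general graphic case handled by a coarser but similar argument.

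I expect the main obstacle to be twofold. First, the bounded-rank perturbations: a rank-$k$ perturbation can in principle add a $K_{m,m}$ to a fundamental graph, so one must use $K_{s,s}$-freeness of the \emph{perturbed} graph to argue that the perturbation cannot destroy the sparsity of the unperturbed part. Second, the bookkeeping across the tree-decomposition: one must track how adhesion sets and perturbations interact with the $K_{s,s}$-free hypothesis so that the per-torso estimates combine into a single bound depending only on $H$ and $s$. The initial reduction to bipartite graphs is a further delicate point, since it must be carried out without creating a large biclique.
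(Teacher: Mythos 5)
Your overall plan (translate to binary matroids, apply the Geelen--Gerards--Whittle theorem, and handle the graphic/cographic case by an argument about fundamental paths in a tree) matches the paper's in spirit, but there are two genuine gaps. First, your reduction to the bipartite case rests on a false premise and an unsubstantiated substitute. McCarty's theorem states exactly that every $K_{s,s}$-free graph of sufficiently large average degree \emph{does} contain a bipartite $C_4$-free induced subgraph of large average degree, so the induced-subgraph route you dismiss is available (and is what the paper uses); your alternative of ``pivoting with care on edges with sparse neighbourhoods'' is not backed by any argument that the resulting bipartite pivot-minor is simultaneously dense and $K_{s',s'}$-free. Discarding the $C_4$-free conclusion is also costly for the next step (see below). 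Second, the announced structure theorem you may cite applies only to \emph{highly connected} members of the minor-closed class (each such matroid is a bounded-rank perturbation of a graphic or cographic matroid); the bounded-adhesion tree-decomposition version with bounded-branch-width/graphic/cographic torsos is not what has been announced, and your ``assembling the torsos inflates the edge count by a bounded amount'' step is precisely where the difficulty lies: there is no argument for how $K_{s,s}$-freeness of the full fundamental graph controls the density contributed across adhesions and perturbed torsos. The paper avoids this entirely by first extracting, from the dense graph, a bipartite $C_4$-free induced subgraph that is still dense and then, via Mader's theorem together with the fact that $C_4$-free $k$-connected graphs are $(k+1)$-rank-connected, a highly \emph{rank-connected} dense piece; rank-connectivity of the fundamental graph equals matroid connectivity, so the highly connected form of the structure theorem applies directly, with no tree-decomposition bookkeeping needed. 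This connectivity-boosting step is absent from your proposal and cannot be recovered once you have given up $C_4$-freeness.

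Finally, the perturbation issue you flag as ``the main obstacle'' is left unsolved, and its resolution requires a further ingredient you do not identify: a rank-$p$ perturbation partitions each side of the bipartition into at most $2^p$ classes so that on each block the perturbed graph is either the original graph or its \emph{bipartite complement}. Consequently one must prove degree-boundedness not only for graphic fundamental graphs (your tree-path argument) but also for bipartite complements of graphic fundamental graphs, which needs a separate argument (in the paper, a tree-splitting lemma producing either an edge or a vertex separating the spanning tree into large pieces, combined with the observation that each non-tree edge is then complete to one of the pieces in the complement). Without this, the graphic/cographic case after perturbation is not handled, so the proof as proposed does not go through.
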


The condition that $H$ is bipartite is necessary. Since the class of bipartite graphs is closed under taking pivot-minors, every non-bipartite graph $H$ is excluded as a pivot-minor in a bipartite graph.
Thus, \cref{main} does not hold for any non-bipartite $H$ since there exists bipartite $C_4$-free graphs of arbitrarily large average degree (for example, for a prime power $q$, the point-line incidence graph of the projective plane $\text{PG}(2,q)$ is a bipartite $C_4$-free graph with all vertices of degree $q+1$). 

Following Du and McCarty \cite{du2024survey}, we say that a hereditary class of graphs is \emph{degree-bounded} if, for every positive integer $s$, the $K_{s,s}$-free graphs in the class have bounded average degree.
Classic results in this direction are R\"{o}dl's theorem (see~\cite{gyarfas1980induced,kierstead1996applications}) that graphs forbidding an induced tree are degree-bounded (see also~\cite{char2025edge,hunter2024k,scott2023polynomial}) and Kuhn and Osthus's \cite{kuhn2004induced} theorem that graphs forbidding the induced subdivisions of a graph are degree-bounded (see also~\cite{bourneuf2023polynomial,char2025edge,du2023induced,girao2023induced,hunter2024k}). In this context, \cref{main} says that pivot-minor-closed classes of graphs that do not contain all bipartite graphs are degree-bounded.
For more on degree-boundedness, see the recent survey of Du and McCarty \cite{du2024survey}.

Via fundamental graphs (as defined in \cite{bouchet1988graphic,oum2005rank}), proper pivot-minor-closed classes of graphs generalize proper minor-closed classes of binary matroids, which in turn generalize proper minor-closed classes of graphs via graphic matroids.
To date, only few structural results have been established for an arbitrary class of graphs defined by a forbidden pivot-minor. For example, it is known that such a class has the (strong) Erd\H{o}s-Hajnal property \cite{davies2023pivot}. Pivot-minor-closed classes of graphs extend and are closely related to vertex-minor-closed classes of graphs. See the recent survey of Kim and Oum \cite{kim2024vertex} for more on vertex-minors and pivot-minors.

Our proof of \cref{main} relies on the announced structure theorem of Geelen, Gerards, and Whittle \cite{geelen2015highly} for highly connected binary matroids forbidding a minor (see \cref{thm:mat_mc_str}), which is essentially equivalent to a structure theorem for bipartite graphs forbidding a fixed bipartite graph as a pivot-minor (see \cref{thm:bip_mc_str}).

A \emph{circle graph} is the intersection graph of a collection of chords in a circle.
Bipartite circle graphs are believed to play a fundamental role in the structure of pivot-minor-closed classes of bipartite graphs, analogous to the role that planar graphs play in the structure of minor-closed classes of graphs.
For instance, Oum's \cite{oum2009excluding} conjectured pivot-minor grid theorem states that for every bipartite circle graph $H$, every graph not containing $H$ as a pivot-minor has bounded rank-width. Bipartite circle graphs form a pivot-minor-closed class of graphs. Moreover, Geelen and Oum \cite{geelen2009circle} characterized bipartite circle graphs as the bipartite graphs forbidding the fundamental graphs of the matroids $M(K_{3,3})$, $M(K_5)$, and $F_7$. This is equivalent to Tutte's~\cite{tutte1959matroids} theorem that a binary matroid is the graphic matroid of a planar graph if and only if it forbids $M(K_{3,3})$, $M(K_5)$, $F_7$, and their duals as a minor.

Along the way to proving \cref{main}, we show the following using de Fraysseix's theorem~\cite{de1981local} (see \cref{Fray}).

\begin{theorem}\label{circle}
    Every $K_{s,t}$-free bipartite circle graph with $s\le t$ has a vertex of degree at most $\max\{2s-2,t-1\}$.
\end{theorem}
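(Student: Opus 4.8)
The plan is to invoke \cref{Fray} to move to a planar setting, translate ``small degree'' into ``short fundamental cycle or cut'', and then run an extremal argument on a peripheral piece of the planar structure. By de Fraysseix's theorem (\cref{Fray}) we may write $G$ as the fundamental graph of a connected plane multigraph $H$ with respect to a spanning tree $T$, identifying one colour class of $G$ with $E(T)$ and the other with $E(H)\setminus E(T)$, with a tree edge $e$ adjacent to a non-tree edge $f$ exactly when $e$ lies on the fundamental cycle $C_f$ of $f$ (equivalently, on the path of $T$ joining the ends of $f$). A direct check gives the degree dictionary $\deg_G(f)=|C_f|-1$ for a non-tree edge $f$ and $\deg_G(e)=|\delta_T(e)|-1$ for a tree edge $e$, where $\delta_T(e)$ is the fundamental edge cut of $e$. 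Thus \cref{circle} is equivalent to the assertion that for every connected plane multigraph $H$ with a spanning tree $T$ whose fundamental graph is $K_{s,t}$-free, some fundamental cycle or some fundamental cut of $(H,T)$ has at most $\max\{2s-1,t\}$ edges.

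Write $L:=\max\{2s-1,t\}$, so that $L=\max\{2s-2,t-1\}+1$, and suppose for contradiction that every fundamental cycle and every fundamental cut of $(H,T)$ has at least $L+1$ edges; the aim is then to produce a $K_{s,t}$ in $G=\operatorname{Fund}(H,T)$. When $s=1$ the statement is trivial, since $K_{1,t}$-freeness just bounds the maximum degree by $t-1$, so assume $s\ge 2$. Degenerate configurations are disposed of first: a bridge of $H$ (in particular a vertex of degree at most $1$) gives a fundamental cut of size $1$; a pair of parallel edges one of which is in $T$ gives a fundamental cycle of length $2$; and cut vertices of $H$, together with the twin vertices of $G$ arising from parallel non-tree edges or from suppressible degree-$2$ vertices of $H$, are handled separately (in a $K_{s,t}$-free graph a set of $s$ pairwise twins has at most $t-1$ common neighbours). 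Hence we may assume $H$ is simple and $2$-connected, so that every face of $H$ is bounded by a cycle, a face incident to exactly one non-tree edge $f$ has boundary cycle $C_f$, and every leaf $\lambda$ of $T$ satisfies $\deg_H(\lambda)=|\delta_T(a_\lambda)|\ge L+1$, where $a_\lambda$ is the tree edge at $\lambda$.

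The core of the argument is now combinatorial. Fix a leaf $\lambda$ of $T$ (one exists once $|V(H)|\ge 2$), write $a_\lambda$ for the tree edge at $\lambda$, and let $f_1,\dots,f_{d-1}$, with $d=\deg_H(\lambda)\ge L+1$, be the non-tree edges at $\lambda$ in the rotation of the embedding. Rooting $T$ at $\lambda$, the path $P_{f_i}:=C_{f_i}\setminus\{f_i\}$ is the root-path of $T$ to the other end of $f_i$; thus these $d-1\ge L$ paths all begin with $a_\lambda$, each has at least $L\ge 2s-1$ edges, and — since $H$ is planar, i.e.\ the non-tree edges are pairwise non-crossing — the family $\{P_{f_i}\}$ is non-crossing in the plane tree $T$. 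The goal is to show that, using in addition that every fundamental cut of $(H,T)$ has at least $L+1$ edges, one of the following must hold: either some $s$ tree edges lie on at least $t$ of the cycles $C_{f_i}$, or some $t$ tree edges lie on at least $s$ of them. Either outcome exhibits a $K_{s,t}$ in $G$, contradicting $K_{s,t}$-freeness; and these two outcomes are exactly what produce the two terms in $\max\{2s-2,t-1\}$, with the claimed extremal examples — blow-ups of even cycles realising the value $2s-2$, and suitable regular bipartite circle graphs realising the value $t-1$ — showing that neither term can be improved. I expect this extraction to be the main obstacle: it requires juggling the non-crossing structure of the paths, the lower bound on their lengths, and the lower bound on every fundamental cut at once, and obtaining the exact constants rather than a bound depending only on $s$ and $t$ — for which a careful pigeonhole, together with the cycle-space identity relating $C_{f_i}$, $C_{f_j}$ and the faces lying between $f_i$ and $f_j$, looks like the right tool.
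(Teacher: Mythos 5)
Your reduction via \cref{Fray} and the degree dictionary ($\deg_G(f)=|C_f|-1$ for a non-tree edge, $\deg_G(e)=|\delta_T(e)|-1$ for a tree edge) is fine, but the proof has a genuine gap: the ``core of the argument'' is never carried out. The dichotomy you announce as the goal --- ``either some $s$ tree edges lie on at least $t$ of the cycles $C_{f_i}$, or some $t$ tree edges lie on at least $s$ of them'' --- is not an intermediate step but essentially a restatement of the $K_{s,t}$ you must find, and you explicitly defer its proof (``I expect this extraction to be the main obstacle''). Moreover, the setup you choose does not obviously support it: rooting $T$ at the leaf $\lambda$ itself, the $d-1\ge L$ tree paths $P_{f_i}$ are only guaranteed to share the single edge $a_\lambda$ and may branch immediately afterwards, so their non-crossing (laminar) structure in the plane tree, the length bound $|P_{f_i}|\ge 2s-1$, and the cut bound do not visibly combine to give $s$ tree edges common to $t$ of the paths; no mechanism for the pigeonhole is given, and the appeal to the cycle-space identity is only a hope. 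The reductions to a simple, $2$-connected $H$ and the twin-vertex remarks are harmless but do not touch this missing step.

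The paper closes exactly this gap with a different (and simpler) choice: it proves the statement for \emph{all} graphic fundamental graphs (\cref{lem:fun}), using planarity only through \cref{Fray}. Root $T$ at an arbitrary vertex $r$ and take a leaf $\ell$ at \emph{maximum} distance from $r$, with root path $P$. If the minimum degree were at least $\max\{2s-1,t\}$, then the pendant tree edge at $\ell$ has degree $\ge t$, i.e.\ at least $t$ non-tree edges $f$ are incident to $\ell$; each such $f$ has $|E(P_f)|\ge 2s-1$, and --- this is the key point your rooting loses --- by maximality of the depth of $\ell$, the part of $P_f$ that leaves $P$ can be no longer than the part on $P$, so $P_f\cap P$ is an initial segment of $P$ of length at least $|E(P_f)|/2\ge s$. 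Hence all these $\ge t$ paths contain the same first $s$ edges of $P$, giving a $K_{s,t}$ immediately, with no use of the fundamental-cut lower bound, of planarity of $H$, or of any non-crossing structure. If you want to salvage your outline, replacing ``root at $\lambda$'' by ``choose a deepest leaf with respect to an arbitrary root'' is the missing idea; as written, the argument is incomplete.
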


\cref{circle} is best possible in the sense that for every pair of positive integers $s,t$, there is a $K_{s,t}$-free bipartite circle graph with minimum degree equal to $\max\{2s-2,t-1\}$ (see \cref{fig:Fundamentalgraphs}).
For circle graphs in general, a result of Fox and Pach \cite{fox2010separator} implies a linear upper bound on their minimum degree.

Our proof of \cref{main} proceeds in three steps. In \cref{sec:fun}, we prove degree-boundedness for fundamental graphs of graphic matroids and also for their bipartite complements. Along the way in this section, we also deduce \cref{circle}. In \cref{sec:rank}, we show that every $K_{s,s}$-free graph with large average degree contains an induced $C_4$-free bipartite subgraph with large average degree and large rank-connectivity. Finally, in \cref{sec:binary}, we combine these results with the binary matroid structure theorem of Geelen, Gerards, and Whittle \cite{geelen2015highly} to obtain \cref{main}. We conclude with some further open problems in \cref{sec:open}.

\section{Fundamental graphs}\label{sec:fun}

For a connected graph $G$ with a spanning tree $T$, the \emph{fundamental matrix of $G$ with respect to $T$}
is the binary matrix $D$ with rows indexed by $E(T)$ and columns indexed by $E(G)-E(T)$, where, for $i\in E(T)$ and $j\in E(G)-E(T)$, the entry $D_{i,j}$ is equal to $1$ if the cycle in $T\cup\{j\}$ contains $i$, and is equal to $0$ if it does not.
The bipartite graph with biadjacency matrix $D$ between $E(T)$ and $E(G)-E(T)$ is called the \emph{fundamental graph of $G$ with respect to $T$}. Note that $G$ is allowed to have multi-edges. If a bipartite graph can be constructed in this way, then we call it a \emph{graphic fundamental graph} (as it is the fundamental graph of a graphic matroid).
Note that graphic fundamental graphs form a hereditary class of graphs as deletion of vertices in $E(T)$ corresponds to their contraction in $G$, while deletion of vertices in $E(G)-E(T)$ corresponds to their deletion in $G$.
As the bipartite complements of members in a hereditary class of bipartite graphs themselves form a hereditary class, the bipartite complements of graphic fundamental graphs also form a hereditary graph class.

We begin by showing that graphic
fundamental graphs are degree-bounded.

\begin{lemma}\label{lem:fun}
    For positive integers $s\le t$, each $K_{s,t}$-free graphic fundamental graph contains a vertex of degree at most $\max\{2s-2,t-1\}$.
\end{lemma}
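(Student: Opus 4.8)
The plan is to prove the contrapositive by contradiction: I will assume that a graphic fundamental graph $B$ has minimum degree at least $d+1$, where $d:=\max\{2s-2,\,t-1\}$, and produce a $K_{s,t}$ subgraph. Fix a connected graph $G$ and spanning tree $T$ witnessing that $B$ is a graphic fundamental graph, so the two sides of $B$ are $E(T)$ and $F:=E(G)-E(T)$; for a non-tree edge $j=xy$ its $B$-neighbourhood is exactly $E(P_j)$, where $P_j$ is the $x$–$y$ path in $T$, and for a tree edge $e$ its $B$-neighbourhood is $\{j\in F : e\in E(P_j)\}$, i.e.\ the set of non-tree edges whose two endpoints lie in different components of $T-e$. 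We may assume $E(T)\neq\emptyset$, as otherwise $B$ has only isolated vertices.

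First I would root $T$ at an arbitrary vertex and choose a vertex $\ell$ of maximum depth $a$; such an $\ell$ has no children, hence is a leaf, and every vertex of $T$ has depth at most $a$. Let $r=v_0,v_1,\dots,v_a=\ell$ be the path from the root down to $\ell$, and let $e_\ell=v_{a-1}v_a$. Deleting $e_\ell$ isolates $\ell$ in $T$, so $N_B(e_\ell)$ is precisely the set of non-tree edges incident with $\ell$ (a loop at $\ell$, if any, does not belong to $N_B(e_\ell)$ and may be ignored). Since $\deg_B(e_\ell)\ge d+1$, there are at least $d+1$ such non-tree edges $j_1,\dots,j_k$ with $k\ge d+1$, say $j_i=\ell y_i$ with $y_i\neq\ell$.

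The crux is to show that each fundamental path $P_{j_i}$ must climb at least $s$ steps up the common spine $v_0,\dots,v_a$. Let $z_i=\mathrm{lca}(\ell,y_i)=v_{b_i}$; since $y_i\neq\ell$ we have $b_i\le a-1$, and since $\ell$ has maximum depth, $\mathrm{depth}(y_i)\le a$. Splitting $P_{j_i}$ at $z_i$ gives $|E(P_{j_i})| = (a-b_i) + (\mathrm{depth}(y_i)-b_i) \le 2(a-b_i)$, so from $\deg_B(j_i)=|E(P_{j_i})|\ge d+1\ge 2s-1$ we get $a-b_i\ge s-\tfrac12$, hence $a-b_i\ge s$ as this is an integer (in particular $a\ge s$). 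Thus $b_i\le a-s$, so $P_{j_i}$ contains the sub-path $v_{a-s}v_{a-s+1}\cdots v_a$ of $T$, whose $s$ edges form a fixed set $S$ independent of $i$. Every $j_i$ is therefore $B$-adjacent to all $s$ tree edges of $S$; picking any $t$ of the $j_i$ (possible since $k\ge d+1\ge t$) yields a copy of $K_{s,t}$ with sides $S$ and $\{j_{i_1},\dots,j_{i_t}\}$, contradicting the hypothesis. Hence $B$ has a vertex of degree at most $d$.

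The one genuinely new idea here is the choice of a deepest leaf: this is exactly what forces the two pieces of each fundamental path through $\ell$ to have comparable length, so that a path of length $\ge 2s-1$ must use at least $s$ edges of one fixed root-to-leaf spine. I expect the only things to be careful about are the multigraph conventions (loops and edges parallel to $e_\ell$ contribute nothing to $N_B(e_\ell)$, and under the minimum-degree hypothesis cannot occur among the $j_i$ once $d\ge 1$; the degenerate case $d=0$ forces $s=t=1$ and is trivial) and getting the orientation of the target $K_{s,t}$ right — here the $s$-side sits among the tree edges and the $t$-side among the non-tree edges.
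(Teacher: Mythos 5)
Your proof is correct and follows essentially the same approach as the paper: both pick a deepest leaf $\ell$, observe that the fundamental path of every non-tree edge at $\ell$ must run along the root-to-$\ell$ spine for at least half its length (hence at least $s$ edges), and extract a $K_{s,t}$ with the $s$ spine edges on one side and $t$ of the non-tree edges on the other. (Only a cosmetic slip: a non-tree edge parallel to $e_\ell$ \emph{is} adjacent to $e_\ell$ in $B$, but as you note it has degree $1$ and so cannot occur under the minimum-degree hypothesis, so nothing is affected.)
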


\begin{proof}
    Suppose otherwise, then there exists a fundamental graph $H$ of a connected graph $G$ with respect to a spanning tree $T$ such that $H$ has minimum degree at least $\max\{2s-1,t\}$ and is $K_{s,t}$-free.

    Choose a vertex $r$ of $T$ and a leaf $\ell$ of $T$ whose distance from $r$ is maximised.
    Let $P$ be the path in $T$ between $\ell$ and $r$, and let $p$ be the vertex adjacent to $\ell$ in $T$.
    Let $F$ be the set of edges in $E(G)\backslash E(T)$ incident to $\ell$.
    Since every edge of $G$ has degree at least $t$ in $H$, it follows that $|F|\ge t$.
    Each $f\in F$ is adjacent in $H$ to $E(P_f)$ for some path $P_f$ in $T$ starting at $\ell$. By the maximality of $\ell$, it follows that $P_f\cap P$ is a subpath of $P$ of length at least $|E(P_f)|/2 $ starting at $\ell$ for each $f\in F$.
    As $|E(P_f)|\ge 2s-1$ for each $f\in F$, it follows that $\bigcap_{f\in F} P_f$ is a subpath $P^*$ of $P$ that starts at $\ell$ and is of length at least $s$.
    But then $E(P^*)\cup F$ induces a complete bipartite graph in $H$ that contains $K_{s,t}$ as a subgraph, a contradiction.
\end{proof}

\Cref{circle} now immediately follows from \cref{lem:fun} and the following theorem of de Fraysseix \cite{de1981local}.

\begin{theorem}[de Fraysseix \cite{de1981local}]\label{Fray}
    A bipartite graph is a circle graph if and only if it is a fundamental graph of a planar graph.
\end{theorem}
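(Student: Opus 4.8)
\emph{Proof sketch.} The plan is to set up a dictionary between chord diagrams and plane graphs carrying a distinguished spanning tree, under which ``two chords cross'' translates to ``a tree edge lies on a fundamental cycle''. A \emph{chord diagram} is a finite family of chords of a circle $C$ bounding a closed disc $D$; after a perturbation I may assume all endpoints on $C$ are distinct and no three chords concurrent, and I use repeatedly that the crossing graph of a chord diagram depends only on the cyclic order of the endpoints together with their pairing, so it is irrelevant whether a chord is drawn inside $D$ or outside.

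For the implication that a graphic fundamental graph of a planar graph is a circle graph, I start from a connected planar graph $G$ with a fixed plane embedding and a spanning tree $T$. Let $D$ be a small closed regular neighbourhood of $T$; since $T$ is simply connected, $D$ is a disc, and I may assume every edge of $E(G)\setminus E(T)$ meets $D$ only in two short stubs at its endpoints, so that truncating it at $C=\partial D$ turns it into a chord $\chi_f$, and the $\chi_f$ are pairwise non-crossing because $G$ is embedded. Each tree edge $e$ becomes a thin corridor in $D$; cutting that corridor disconnects $D$ into the neighbourhoods of the two components $T_1^e,T_2^e$ of $T-e$ and splits $C$ at two points, and I let $\chi_e$ be the chord through those two points. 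The bipartitions $\{V(T_1^e),V(T_2^e)\}$ over $e\in E(T)$ form a laminar family, so the $\chi_e$ are pairwise non-crossing. The crux is then a one-line topological check: for a non-tree edge $f=uv$, the chords $\chi_e$ and $\chi_f$ cross iff the two endpoints of $\chi_f$ lie on opposite sides of $\chi_e$ iff $u$ and $v$ lie in different components of $T-e$ iff $e$ lies on the $u$--$v$ path in $T$ iff $e$ lies on the fundamental cycle of $f$. Hence the crossing graph of $\{\chi_e\}\cup\{\chi_f\}$ is exactly the fundamental graph of $G$ with respect to $T$, so that graph is a circle graph.

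For the converse, let $G$ be a bipartite circle graph with parts $A$ and $B$ and a realization by chords $\chi_v$ of $C$ with $\chi_v\cap\chi_w\ne\emptyset$ iff $vw\in E(G)$; since $A$ and $B$ are independent, the $A$-chords are pairwise non-crossing, as are the $B$-chords. The $A$-chords cut $D$ into $|A|+1$ regions; I build a graph $H$ on these regions by adding, for each $a\in A$, an edge $e_a$ between the two regions incident to $\chi_a$, and for each $b\in B$, an edge $f_b$ between the (unique) regions whose boundaries contain the two endpoints of $\chi_b$ on $C$ (a loop if $\chi_b$ crosses no $A$-chord). Then $T:=\{e_a:a\in A\}$ is the region-adjacency graph of a laminar chord family, hence a spanning tree of the connected graph $H$. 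To see $H$ is planar, put a vertex in the interior of each region, draw each $e_a$ as an arc meeting $\chi_a$ in a single point, and draw each $f_b$ by a path from the vertex of one endpoint-region to that endpoint of $\chi_b$ (inside the region), then across the complementary disc $D'=S^2\setminus\operatorname{int}D$ to the other endpoint, then into the other region; two such arcs can be kept disjoint in $D'$ precisely because two $B$-chords never interleave on $C$, and the remaining pieces stay inside single regions where they are easily kept disjoint, so no two edges of $H$ cross. Finally $e_a$ lies on the fundamental cycle of $f_b$ iff deleting $e_a$ separates in $T$ the two endpoint-regions of $f_b$ iff $\chi_a$ separates on $C$ the two endpoints of $\chi_b$ iff $\chi_a$ and $\chi_b$ cross iff $ab\in E(G)$, so $G$ is the fundamental graph of the planar graph $H$ with respect to $T$.

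The hard part is not a single idea but the care in the topology: checking in both directions that ``interleaving on $C$'' is faithfully matched by ``lying on a fundamental cycle'', verifying planarity of the constructed $H$ by routing its non-tree edges through the complementary disc of the sphere, and clearing away the degenerate configurations (coincident or concurrent endpoints, a $B$-chord crossing no $A$-chord) by general-position perturbations and by permitting loops and parallel edges in $H$, both of which the definition of a fundamental graph already allows.
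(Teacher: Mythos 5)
The paper does not actually prove \cref{Fray}: it is quoted as a known theorem of de Fraysseix \cite{de1981local} and used as a black box, so there is no internal proof to compare against. Judged on its own merits, your sketch is the standard (and, as far as I can tell, correct) argument for this equivalence, and both directions are set up properly. In one direction you take a regular neighbourhood $D$ of the spanning tree $T$, turn each non-tree edge into a chord via its two exit points on $\partial D$ and each tree edge into the chord of its cutting arc, and the three checks you need are all right: the tree-edge chords are pairwise non-interleaving (most cleanly because the cutting arcs are disjoint arcs properly embedded in the disc $D$, which is the fact your laminarity remark is standing in for), the non-tree-edge chords are pairwise non-interleaving because their portions outside $D$ are disjoint arcs in the complementary disc, and a tree chord interleaves a non-tree chord exactly when the tree edge lies on the corresponding fundamental cycle. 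In the converse direction, building the region tree of the pairwise non-crossing $A$-chords, attaching an edge $f_b$ between the regions of the two endpoints of $\chi_b$, and routing the $f_b$'s through the complementary disc (possible precisely because the $B$-chords do not interleave) is the usual construction, and your equivalence ``$e_a$ on the fundamental cycle of $f_b$ iff $\chi_a$ separates the endpoints of $\chi_b$ iff $ab\in E(G)$'' is correct; allowing loops and multi-edges is consistent with the paper's definition of a graphic fundamental graph. The remaining work is exactly what you flag: general-position perturbation, the existence of a sufficiently small regular neighbourhood, and the elementary Jordan-curve facts about disjoint arcs in a disc. So this is a sound sketch of de Fraysseix's theorem rather than a new route, and if you wanted a fully rigorous write-up you would only need to expand those routine topological steps.
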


The minimum degree bounds in \cref{lem:fun} and in \cref{circle} are tight. When $t-1>2s-2$, the graph $K_{t-1,t-1}$ has minimum degree $t-1$ and is a $K_{s,t}$-free fundamental graph of a planar graph (see \cref{fig:Fundamentalgraphs} (a)).
When $2s-2\geq t$, the $(s-1)$-blow up of the $6$-cycle (the graph obtained from the $6$-cycle by replacing each vertex by an independent set of $s-1$ vertices where two vertices are adjacent if their original ones are) has minimum degree $2s-2$ and is a $K_{s,t}$-free fundamental graph of a planar graph (see \cref{fig:Fundamentalgraphs} (b)).

\begin{figure}
    \centering
    \begin{subfigure}[t]{0.4\textwidth}
        \centering
        \includegraphics{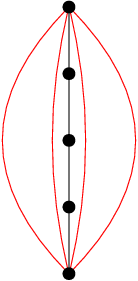}
        \caption{$K_{4,4}$ as a fundamental graph of a planar graph.}
    \end{subfigure}
    \hspace{0.05\textwidth}
    \begin{subfigure}[t]{0.35\textwidth}
        \centering
        \includegraphics{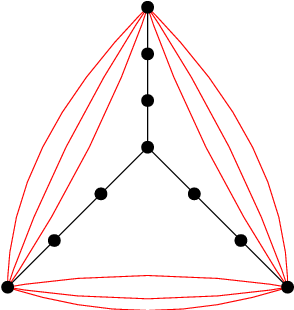}
        \caption{The $3$-blow up of $C_6$ as a fundamental graph of a planar graph.}        
    \end{subfigure}
    \caption{Representations of bipartite circle graphs as fundamental graphs of planar graphs: black edges correspond to the spanning tree $T$, red edges are the non-tree edges $E(G)-E(T)$.}\label{fig:Fundamentalgraphs}
\end{figure}

For a bipartite graph $G=(A,B)$, the \emph{bipartite complement} of $G$ is the bipartite graph with sides $A$ and $B$ that has an edge $e\in A\times B$ precisely when $G$ does not.
We also need to examine the bipartite complements of the fundamental graphs of graphs.
Before this, we require a simple lemma on trees.

\begin{lemma}\label{lem:tree}
    Let $s$ be a positive integer and $T$ be a tree with at least $5s$ edges.
    Then either there exists an edge $e$ such that the two trees of $T-e$ have at least $s$ edges, or there exists a vertex $v$ and three edge-disjoint subtrees $T_1,T_2,T_3$, each with at least $s$ edges and with $V(T_1)\cap V(T_2)= V(T_1)\cap V(T_3) = V(T_2)\cap V(T_3) = \{v\}$.
\end{lemma}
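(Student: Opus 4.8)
The plan is to assume the first outcome fails and to produce the configuration of the second outcome by a centroid-type orientation argument. So suppose that for every edge $e$ of $T$, at least one of the two trees of $T-e$ has fewer than $s$ edges; call such a tree a \emph{light side} of $e$. Since $|E(T)|\geq 5s$, the two trees of $T-e$ have edge-counts summing to $|E(T)|-1\geq 5s-1>2(s-1)$, so they cannot both be light; hence each edge $e$ has a unique light side, and I orient $e$ from its light side towards the other tree. The underlying graph is a tree, so this orientation contains no directed cycle, and therefore (following edges forward from any vertex must terminate) it has a sink $v$: every edge incident with $v$ points towards $v$. I will use this $v$ as the vertex in the second outcome.

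For each edge $e$ incident with $v$, the tree of $T-e$ not containing $v$ is the light side of $e$, so it has at most $s-1$ edges; consequently the corresponding \emph{branch} at $v$ — that tree together with the edge $e$ — has at most $s$ edges. Letting $B_1,\dots,B_d$ be the branches at $v$ (so $d=\deg_T(v)$), their edge sets partition $E(T)$, so $\sum_{i=1}^{d}|E(B_i)|=|E(T)|\geq 5s$ while $1\leq |E(B_i)|\leq s$ for every $i$; in particular $d\geq 5$.

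It remains to distribute the branches among three trees, which I would do greedily: add branches to a first group until their total number of edges first reaches $s$; since each branch has at most $s$ edges, this stops with a total of at most $2s-1$, so at least $5s-(2s-1)=3s+1$ edges' worth of branches remain. Repeat to form a second group, again stopping with total at most $2s-1$ and leaving at least $s+2$ edges' worth of branches, which all go into the third group. Defining $T_j$ to be the union of $v$ with the branches assigned to the $j$-th group, each $T_j$ is a subtree with at least $s$ edges; the $T_j$ are pairwise edge-disjoint because distinct branches are; and for $j\neq k$ we have $V(T_j)\cap V(T_k)=\{v\}$ because the branches assigned to different groups share only the vertex $v$. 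This yields the second outcome. I do not expect a real obstacle here: the only points needing care are checking that $|E(T)|\geq 5s$ forces the light side to be unique and leaves enough branches for the greedy step to terminate with three valid groups, and noting that a branch may carry up to $s$ edges (it includes the edge joining it to $v$), which is exactly why the bound $5s$ — rather than something smaller — is comfortable.
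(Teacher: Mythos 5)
Your proof is correct. It reaches the same combinatorial core as the paper --- find a vertex $v$ all of whose branches have at most $s$ edges, then greedily group branches --- but it gets there by a different route. The paper argues directly: it roots $T$, takes a deepest vertex $v$ whose descendant subtree has at least $s$ edges (so every child branch is light), and then splits into cases, either building $T_1,T_2$ greedily from child branches with $s\le|E(T_i)|\le 2s$ and taking $T_3=T\setminus(T_1\cup T_2)$ (which may include the part of $T$ above $v$), or, when the descendant subtree has fewer than $3s$ edges, producing the first outcome via the parent edge of $v$. You instead assume the first outcome fails, orient every edge from its unique light side to its heavy side, and take a sink of this acyclic orientation; the failure assumption guarantees that \emph{all} branches at the sink are light, so you can greedily form all three trees from branches and avoid the case analysis entirely. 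Your centroid-style argument is arguably cleaner (no rooting, no case split, and the three trees are symmetric in construction), at the mild cost of working in the contrapositive; the paper's direct construction gives the extra information that $T_1,T_2$ can be taken with at most $2s$ edges, though the lemma does not need it. All the quantitative checks in your write-up (uniqueness of the light side from $5s-1>2(s-1)$, each branch having at most $s$ edges, the greedy stopping below $2s$, and the residual counts $3s+1$ and $s+2$) are correct.
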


\begin{proof}
    Choose an arbitrary root $r$ of $T$. For two vertices $u,v$ of $T$, we say that $u$ is a \emph{descendant} of $v$ when $v$ is contained on the unique path between $r$ and $u$. Note that $u$ is a descendant of itself.
    For a vertex $v$ of $T$, we let $T(v)$ be the subtree of $T$ induced by the descendants of $v$.

    Choose a vertex $v$ of $T$ with $|E(T(v))|\ge s$, and subject to this, with distance from $r$ maximised.
    Such a vertex exists since $r$ is a candidate.
    
    Suppose first that $|E(T(v))| \ge 3s$.
    Let $c_1, \ldots ,c_k$ be the children of $v$ (the descendents of $v$ that are adjacent to $v$).
    By the choice of $v$, we have that $|E(T(c_1))|, \ldots , |E(T(c_k))| < s$.
    In particular, $|E(T(c_1)) \cup \{c_1v\}|, \ldots , |E(T(c_k)) \cup \{c_kv\}| \le s$.
    Therefore, we can choose two edge-disjoint subtrees $T_1,T_2$ of $T(v)$ containing the vertex $v$, such that $s\le |E(T_1)|, |E(T_2)| \le 2s$ and $E(T_1), E(T_2)$ are each the union of some collection from $E(T(c_1)) \cup \{c_1v\}, \ldots , E(T(c_k)) \cup \{c_kv\}$.
    Let $T_3= T\backslash (T_1 \cup T_2)$, then $T_3$ is also a subtree of $T$ that is edge-disjoint from $T_1,T_2$ and contains the vertex $v$.
    Furthermore, $|E(T_3)|= |E(T)|-|E(T_1)|- |E(T_2)| \ge 5s - 2s -2s =s$, as desired.

    We may therefore now assume that $|E(T(v))| < 3s$.
    In particular, this implies that $v\not= r$.
    Let $p$ be the parent of $v$ (the vertex adjacent to $v$ that has $v$ as a descendent).
    Since $|E(T)|\ge 5s$, it now follows that the two trees of $T -pv$ have at least $s$ edges, as desired.
\end{proof}

\begin{lemma}\label{lem:cofun}
    Every $K_{s,s}$-free bipartite complement of a graphic fundamental graph contains a vertex of degree at most $5s-1$.
\end{lemma}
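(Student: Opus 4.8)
The plan is to argue by contradiction. Suppose $H$ is the fundamental graph of a connected graph $G$ with respect to a spanning tree $T$, and that its bipartite complement $\overline{H}$, with parts $E(T)$ and $E(G)-E(T)$, is $K_{s,s}$-free with minimum degree at least $5s$. The first step is simply to record the adjacency rule in $\overline{H}$: a tree edge $i\in E(T)$ is adjacent to a non-tree edge $j\in E(G)-E(T)$ precisely when $i$ does \emph{not} lie on the fundamental cycle $C_j$ of $j$ --- equivalently, when both endpoints of $j$ lie in the same component of $T-i$. Since $\overline{H}$ is bipartite, every vertex of $E(T)$ has all its neighbours in $E(G)-E(T)$ and vice versa, so the minimum-degree hypothesis gives $|E(T)|\ge 5s$ and $|E(G)-E(T)|\ge 5s$. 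In particular $T$ has at least $5s$ edges, so we may apply \cref{lem:tree} to $T$ and split into its two outcomes.

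In the first outcome there is an edge $e\in E(T)$ such that the two trees $A,B$ of $T-e$ each have at least $s$ edges. A non-tree edge with both endpoints in $V(A)$ has its fundamental cycle contained in $A$, so it is adjacent in $\overline{H}$ to every edge of $B$; as $|E(B)|\ge s$, the $K_{s,s}$-free hypothesis forces at most $s-1$ such non-tree edges, and symmetrically at most $s-1$ non-tree edges with both endpoints in $V(B)$. By the adjacency rule these are exactly the $\overline{H}$-neighbours of $e$, so $\deg_{\overline{H}}(e)\le 2s-2<5s$, contradicting the minimum-degree hypothesis.

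In the second outcome there is a vertex $v$ and edge-disjoint subtrees $T_1,T_2,T_3$, each with at least $s$ edges, with $V(T_1)\cap V(T_2)=V(T_1)\cap V(T_3)=V(T_2)\cap V(T_3)=\{v\}$. The crux is the claim that the tree part of any fundamental cycle --- which is a path in $T$ --- contains edges of at most two of $T_1,T_2,T_3$. Granting this, each non-tree edge $j$ is adjacent in $\overline{H}$ to all of $E(T_i)$ for at least one $i\in\{1,2,3\}$; distributing the at least $5s$ non-tree edges among the three indices (if each index caught at most $s-1$ of them the total would be at most $3s-3<5s$), some fixed $E(T_i)$ is complete in $\overline{H}$ to at least $s$ non-tree edges, and since $|E(T_i)|\ge s$ this gives a $K_{s,s}$ in $\overline{H}$ --- the final contradiction.

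To prove the claim --- which I expect to be the main obstacle --- I would use that in a tree, two subtrees meeting in the single vertex $v$ are separated by $v$: any $T$-path between a vertex of one and a vertex of the other, neither of them $v$, must pass through $v$, as one sees by locating the vertex of that path closest to $v$ and observing it lies in both subtrees, hence equals $v$. If a path $Q$ in $T$ contained edges $e_1\in E(T_a)$, $e_2\in E(T_b)$, $e_3\in E(T_c)$ with $a,b,c$ distinct, listed in their order along $Q$, then the stretch of $Q$ strictly between $e_1$ and $e_2$ and the disjoint stretch strictly between $e_2$ and $e_3$ would each be forced to contain $v$, which is impossible since $Q$ is a simple path. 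Everything else is bookkeeping once the adjacency rule and this claim are in hand; degenerate cases (loops in $G$, few vertices on one side of $\overline{H}$) are absorbed by the reduction in the first paragraph.
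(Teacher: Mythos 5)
Your proposal is correct and follows essentially the same route as the paper: apply \cref{lem:tree}, then in the single-edge case contradict the minimum degree of $e$ (equivalently, the paper's pigeonhole on the $\ge 2s$ neighbours of $e$), and in the three-subtree case observe that each fundamental path misses at least one of $E(T_1),E(T_2),E(T_3)$ and pigeonhole the non-tree edges into a $K_{s,s}$. The only difference is that you spell out, via the median-vertex argument, the separation claim that the paper states as a bare observation, which is a fine (and slightly more careful) way to present the same step.
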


\begin{proof}
    Suppose otherwise, then there exists a graph $H$ that is the bipartite complement of a fundamental graph of a connected graph $G$ with respect to a spanning tree $T$ such that $H$ has minimum degree at least $5s$ and is $K_{s,s}$-free.

    Since $H$ has minimum degree at least $5s$, it follows that $T$ has at least $5s$ edges. Then by \cref{lem:tree}, either there exists an edge $e$ of $T$ such that the two trees of $T-e$ have at least $s$ edges, or there exists a vertex $v$ of $T$ and three edge-disjoint subtrees $T_1,T_2,T_3$, each with at least $s$ edges and with $V(T_1)\cap V(T_2)= V(T_1)\cap V(T_3) = V(T_2)\cap V(T_3) = \{v\}$.

    Suppose first that there exists an edge $e$ of $T$ such that the two trees $T_1,T_2$ of $T-e$ have at least $s$ edges.
    By the minimum degree of $H$, there is some $F\subseteq E(G)\backslash E(T)$ with $|F|\ge 2s$ such that every $f\in F$ is adjacent to $e$ in $H$. Observe that every such $f\in F$ has endpoints within either $V(T_1)$ or $V(T_2)$. In particular, in $H$, each such $f$ is either adjacent to every vertex of $E(T_1)$, or every vertex of $E(T_2)$.
    Without loss of generality, there are at least $s$ vertices of $F$ adjacent to every vertex of $E(T_1)$, contradicting the fact that $H$ is $K_{s,s}$-free.

    We may therefore assume that there exists a vertex $v$ of $T$ and three edge-disjoint subtrees $T_1,T_2,T_3$, each with at least $s$ edges and with $V(T_1)\cap V(T_2)= V(T_1)\cap V(T_3) = V(T_2)\cap V(T_3) = \{v\}$.
    Observe that for each $e\in E(G)\backslash E(T)$, the path $P_e$ contained in $T$ between the end points of $e$ is contained in one of $E(T_1)\cup E(T_2)$, $E(T_1)\cup E(T_3)$, $E(T_2)\cup E(T_3)$.
    In particular, in $H$, each $e\in E(G)\backslash E(T)$ is adjacent to all of the vertices of at least one of $E(T_1), E(T_2), E(T_3)$.
    Since $|E(G)\backslash E(T)| \ge 3s$, by the pigeonhole principle, we find $K_{s,s}$ as a subgraph of $H$, a contradiction.
\end{proof}

The following theorem gives degree-boundedness within a class that appears in the structure of bipartite graphs forbidding a bipartite graph as a pivot-minor.

\begin{theorem}\label{lem:struct}
    Let $n,s$ be positive integers.
    Let $G=(A,B)$ be a $K_{s,s}$-free bipartite graph with partitions $A_1,\dots, A_n$ and $B_1,\dots, B_n$ of $A$ and $B$ such that,
    for all $i,j\in\{1,\dots, n\}$, the graph induced on $A_i\cup B_j$ is either a graphic fundamental graph or the bipartite complement of a graphic fundamental graph.
    Then, $G$ has average degree at most $10n^2s$.
\end{theorem}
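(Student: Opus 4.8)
The plan is to bound $|E(G)|$ by showing that each of the $n^2$ induced bipartite subgraphs $G[A_i\cup B_j]$ is sparse, and then double-counting. Concretely, I would first upgrade \cref{lem:fun} and \cref{lem:cofun} from ``there is one low-degree vertex'' to ``degeneracy''. Both the class of graphic fundamental graphs and the class of bipartite complements of graphic fundamental graphs are hereditary (as recalled at the start of this section) and allow deleting vertices from either side; hence for all $A_i'\subseteq A_i$ and $B_j'\subseteq B_j$ the induced subgraph $G[A_i'\cup B_j']$ is again, respectively, a graphic fundamental graph or the bipartite complement of one, and it is still $K_{s,s}$-free. Applying \cref{lem:fun} with $t=s$ in the former case (which yields a vertex of degree at most $\max\{2s-2,s-1\}=2s-2$) and \cref{lem:cofun} in the latter, every such induced subgraph has a vertex of degree at most $5s-1$. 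So $G[A_i\cup B_j]$ is $(5s-1)$-degenerate and therefore has at most $(5s-1)(|A_i|+|B_j|)$ edges.

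Next I would double-count. Writing $e(A_i,B_j)$ for the number of edges of $G$ between $A_i$ and $B_j$, summing the bound above over all $i,j\in\{1,\dots,n\}$ and using $\sum_{i=1}^n\sum_{j=1}^n|A_i|=n|A|$ and $\sum_{i=1}^n\sum_{j=1}^n|B_j|=n|B|$, I get
\[
    |E(G)|=\sum_{i=1}^n\sum_{j=1}^n e(A_i,B_j) \le (5s-1)\,n\,(|A|+|B|) \le 5sn\,|V(G)| ,
\]
so the average degree of $G$ is $2|E(G)|/|V(G)|\le 10sn\le 10n^2 s$. (If one instead only uses the crude bound $e(A_i,B_j)\le 5s\,|V(G)|$ on each of the $n^2$ pairs, the stated constant $10n^2s$ pops out directly.)

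I do not foresee any serious obstacle here: the one point that needs care is the passage from ``a single low-degree vertex'' to ``$(5s-1)$-degenerate'', which is exactly where I use that the two relevant graph classes are hereditary; everything after that is an elementary counting estimate with plenty of slack in the constants.
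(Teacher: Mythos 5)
Your proof is correct and follows essentially the same route as the paper's: decompose $G$ into the $n^2$ bipartite pieces $G[A_i\cup B_j]$, use the hereditariness of the two classes together with \cref{lem:fun} and \cref{lem:cofun} to get a degeneracy-type edge bound on each piece, and sum. Your double counting is in fact slightly more careful (giving average degree at most $10ns$), whereas the paper applies the cruder per-pair bound $|E(G_{i,j})|\le 10s|V(G)|$ to arrive at $10n^2s$.
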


\begin{proof}
    For each pair $i,j\in\{1,\dots, n\}$, let $G_{i,j}$ be the subgraph induced on $A_i\cup B_j$.
    Then, $G_{i,j}$ is either a graphic fundamental graph or the bipartite complement of a graphic fundamental graph. By the handshaking lemma, it follows from \cref{lem:fun}, \cref{lem:cofun}, and the fact that graphic fundamental graphs and bipartite complements of graphic fundamental graphs are hereditary that $|E(G_{i,j})|\le 10s|V(G_{i,j})| \le 10s|V(G)|$.
    Since $G$ is the union of each $G_{i,j}$, it follows that $$|E(G)| = \sum_{i=1}^n \sum_{j=1}^n |E(G_{i,j})| \le \sum_{i=1}^n \sum_{j=1}^n 10s |V(G)| = 10n^2s |V(G)|,$$ as desired.
\end{proof}

\section{Induced subgraphs with large average degree}\label{sec:rank}

In this section, we show that every $K_{s,s}$-free graph with large average degree contains an induced bipartite $C_4$-free subgraph with large average degree and large rank-connectivity (see \cref{lem:rankconnectmain}).
Having large rank-connectivity is important for us to be able to apply the structure theorem of Geelen, Gerards, and Whittle \cite{geelen2015highly} for highly connected binary matroids. Let $G$ be a graph. For a partition $(A,B)$ of $V(G)$, we say the \emph{cut-rank} of $(A,B)$ is the rank of the binary adjacency matrix between $A$ and $B$, that is the $A\times B$-matrix $D$ over $\mathbb{Z}_2$ where for $a\in A$ and $b\in B$ we have $D_{ab}=1$ precisely when $a$ is adjacent to $b$. We say $(A,B)$ is a \emph{separation of rank $\ell$} when it has rank $\ell$ and $|A|,|B| > \ell$. We say a graph $G$ is \emph{$k$-rank-connected}
if there is no separation of rank $\ell$ for any $\ell\in\{0,1,\dots,k-1\}$. We will see the connection between rank-connectivity and the connectivity of binary matroids in Section~\ref{sec:binary}.

The following fundamental result of McCarty \cite{mccarty2021dense} is very useful in the study of degree-boundedness since it reduces the problem of showing that a hereditary graph class is degree-bounded to just showing that the bipartite $C_4$-free graphs in the class have bounded average degree.

\begin{theorem}[McCarty \cite{mccarty2021dense}]\label{mccarty}
    There is a function $f$ such that for all positive integers $d,s$, every $K_{s,s}$-free graph with average degree at least $f(d,s)$ has a bipartite $C_4$-free induced subgraph with average degree at least $d$.
\end{theorem}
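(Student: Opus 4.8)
The plan is to establish the equivalent statement that every $K_{s,s}$-free graph of sufficiently large average degree (as a function of $d$ and $s$) contains an induced subgraph that is simultaneously bipartite, $C_4$-free, and of average degree at least $d$. I would proceed in three stages. The first is a standard cleaning step: repeatedly deleting vertices of below-average degree shows that a graph of average degree at least $D$ has a nonempty subgraph of minimum degree at least $D/2$; this preserves $K_{s,s}$-freeness and loses only a factor of two, so we may assume $G$ has minimum degree at least some large $\delta=\delta(d,s)$.

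Second, I would extract a dense induced \emph{bipartite} subgraph, using the fact that a $K_{s,s}$-free graph has clique number at most $2s-1$ (since $K_{2s}$ contains $K_{s,s}$). Given bounded clique number, one can appeal to the known fact that a graph of bounded clique number and large average degree contains an induced bipartite subgraph of large average degree --- for instance via a Ramsey argument producing a large independent set inside the neighbourhood of a minimum-degree vertex, paired (through a weighted-colouring argument applied to the second neighbourhood of that vertex, which is again $K_{2s}$-free) with a matching independent set capturing many of the incident edges. Since induced subgraphs of $K_{s,s}$-free graphs stay $K_{s,s}$-free, this yields a $K_{s,s}$-free bipartite graph $H=(A,B)$ whose average degree is still large in terms of $d$ and $s$; the quantitative loss may be superpolynomial, but as we only need \emph{some} function $f(d,s)$ this is harmless.

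Third, I would pass from $H$ to an induced $C_4$-free subgraph of still-large average degree by induction on $s$. The base case $s=2$ is immediate. For the inductive step it suffices to find inside $H$ an induced subgraph of large average degree that is $K_{s-1,s-1}$-free and then apply the inductive hypothesis. Here $K_{s,s}$-freeness limits the copies of $K_{s-1,s-1}$: since every $s$-subset of a side has codegree at most $s-1$, a short double count using the large minimum degree shows every $(s-1)$-subset has codegree at most roughly $s|A|/\delta$, which bounds $\sum_{S}\binom{\operatorname{codeg}(S)}{s-1}$ --- and hence the number of copies of $K_{s-1,s-1}$ --- in terms of $|A|$, $|B|$, and $s$. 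One then wants to delete a small proportion of vertices to destroy all these copies while keeping the average degree above a fixed fraction of its value.

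The main obstacle is exactly this last deletion: a single vertex can carry many edges, so a crude union-bound deletion may collapse the average degree if the copies of $K_{s-1,s-1}$ concentrate on high-degree vertices. I expect this to be handled by a regularisation preprocessing --- passing to an induced subgraph where degrees and the relevant codegrees are comparable across the two sides, via dyadic pigeonholing on degrees and repeated removal of unbalanced vertices --- so that the forbidden copies are spread out and can be killed by deleting an $o(1)$ fraction from each side; alternatively, one invokes a dichotomy in which ``many copies of $K_{s-1,s-1}$'' is exploited through a blow-up/supersaturation argument to locate a dense induced piece on which the biclique parameter has genuinely dropped. Finally, tracking how the average-degree bound degrades through the cleaning, the bipartite extraction, and the induction on $s$ assembles the required function $f(d,s)$.
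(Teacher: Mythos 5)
This statement is not proved in the paper at all: it is quoted as a theorem of McCarty \cite{mccarty2021dense}, so there is no in-paper argument to compare yours against, and your proposal has to stand on its own as a proof of a known but genuinely difficult result. It does not: both of the load-bearing steps are left unproven. In your second stage you appeal to ``the known fact'' that bounded clique number plus large average degree forces an induced bipartite subgraph of large average degree. That is not an available known fact at this level of generality: even the triangle-free case is a substantial theorem (Kwan, Letzter, Sudakov and Tran, answering a conjecture of Esperet, Kang and Thomass\'e) and yields only a $\log\delta/\log\log\delta$-type bound, and for $K_r$-free graphs with $r\ge 4$ no routine argument is known. Your sketch does not repair this: the independent set in $N(v)$ produced by Ramsey has size only polynomial-root in $\delta$, and the ``weighted-colouring argument applied to the second neighbourhood'' founders because $K_{2s}$-freeness gives no bound on the chromatic number of that neighbourhood, so a colour class captures only a $1/\chi$ fraction of the incident edges with $\chi$ potentially almost linear in its order; nothing forces the resulting induced bipartite graph to have average degree growing with $\delta$. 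Under the stronger hypothesis actually available ($K_{s,s}$-subgraph-freeness) the bipartite extraction is true, but it is essentially the substance of McCarty's theorem itself, so invoking it would be circular.

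The third stage has the same character. The codegree calculation is fine (if every vertex of $B$ has degree at least $\delta$, then every $(s-1)$-subset $S$ of $A$ satisfies $\operatorname{codeg}(S)\le (s-1)|A|/(\delta-s+1)$, since each $a\in A\setminus S$ has at most $s-1$ neighbours in the common neighbourhood of $S$), and the resulting count of copies of $K_{s-1,s-1}$ is as you say. But the step from ``few copies in terms of $|A|,|B|,s$'' to ``delete a small proportion of vertices, destroying all copies while retaining a constant fraction of the edges'' is exactly the crux, and you explicitly defer it to an unspecified ``regularisation preprocessing'' or a ``supersaturation dichotomy''. The number of copies can vastly exceed the number of edges, the copies can be forced onto the vertices that carry most of the degree sum, and no lemma you state controls this; if such a clean deletion scheme existed, the theorem would have a short proof, whereas the known proofs (McCarty's, and the later polynomial-bound arguments of Gir\~ao--Hunter and Hunter--Milojevi\'c--Sudakov--Tomon) require genuinely nontrivial ideas precisely at this point. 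So the proposal is a plausible high-level outline, but both the bipartite extraction and the biclique-order-reduction step are genuine gaps rather than proofs.
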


Next we handle the reduction to large rank-connectivity.
For vertex connectivity, this was proved by Mader \cite{mader1972existenz}.
A graph $G$ is \emph{$k$-connected} if it has more than $k$ vertices and $G-X$ is connected for every $X\subseteq V(G)$ with $|X|<k$.

\begin{theorem}[Mader \cite{mader1972existenz}]\label{mader}
    For every positive integer $k$, every graph with average degree at least $4k$ has a $(k + 1)$-connected induced subgraph with more than $2k$ vertices.
\end{theorem}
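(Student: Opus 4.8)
The plan is to prove \cref{mader} directly by an extremal argument, which recovers the standard density refinement of Mader's theorem: \emph{every graph $G$ on at least $2k+1$ vertices with $|E(G)| > (2k-1)(|V(G)|-k)$ has a $(k+1)$-connected induced subgraph on at least $2k+1$ vertices.} The first step is to check that this applies to a graph $G$ of average degree at least $4k$. Since the average degree of a graph never exceeds its order minus one, $|V(G)| \geq 4k+1 \geq 2k+1$; and since $|E(G)| \geq 2k\,|V(G)|$, a one-line computation gives $|E(G)| - (2k-1)(|V(G)|-k) = |V(G)| + k(2k-1) > 0$, so $G$ itself satisfies the hypothesis of the density statement. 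Note also that $2k+1 > 2k$, so the requirement of ``more than $2k$ vertices'' in \cref{mader} is then met automatically.

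For the main argument, I would choose an induced subgraph $H \subseteq G$ with $|V(H)|$ minimum subject to $|V(H)| \geq 2k+1$ and $|E(H)| > (2k-1)(|V(H)|-k)$; this family is nonempty because $G$ belongs to it. It suffices to show $H$ is $(k+1)$-connected. If $|V(H)| = 2k+1$, then the edge bound forces $|E(H)| \geq \binom{2k+1}{2}$, so $H = K_{2k+1}$, which is $2k$-connected and hence $(k+1)$-connected. If $|V(H)| \geq 2k+2$, then deleting any single vertex leaves an induced subgraph on at least $2k+1$ vertices which, by minimality of $H$, violates the edge inequality; comparing edge counts then shows every vertex of $H$ has degree at least $2k$, i.e. $\delta(H) \geq 2k$. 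Now suppose for contradiction that $H$ is not $(k+1)$-connected. As $|V(H)| \geq 2k+2$, there is a separator $S$ with $|S| \leq k$ and a partition $V(H)\setminus S = X \sqcup Y$ into nonempty parts with no edges between them. Because $\delta(H) \geq 2k$, each of $H_1 := H[X\cup S]$ and $H_2 := H[Y\cup S]$ has at least $2k+1$ vertices, and each has strictly fewer vertices than $H$, so by minimality $|E(H_i)| \leq (2k-1)(|V(H_i)|-k)$ for $i=1,2$. Since every edge of $H$ lies in $H_1$ or $H_2$, and $|V(H_1)| + |V(H_2)| = |V(H)| + |S| \leq |V(H)| + k$, we obtain
$$|E(H)| \leq |E(H_1)| + |E(H_2)| \leq (2k-1)\bigl(|V(H_1)| + |V(H_2)| - 2k\bigr) \leq (2k-1)(|V(H)|-k),$$
contradicting the defining property of $H$. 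Hence $H$ is $(k+1)$-connected, which completes the proof.

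The only genuinely delicate point — and the step I expect to need the most care to phrase correctly — is the separation case: one must verify that $\delta(H) \geq 2k$ really forces $|V(H_i)| \geq 2k+1$ (a vertex of $X$ has all of its $\geq 2k$ neighbours inside $X\cup S$), that each $H_i$ is a strictly smaller induced subgraph, and that the edge accounting $|E(H)| \leq |E(H_1)| + |E(H_2)|$ is valid — the edges inside $S$ being the only ones possibly counted twice, which merely strengthens the inequality. Everything else is routine bookkeeping with the affine function $n \mapsto (2k-1)(n-k)$, and the constant $4k$ in the hypothesis is exactly the amount of slack needed to make the initial density estimate strict.
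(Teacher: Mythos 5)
Your proposal is correct. Note, though, that the paper does not prove this statement at all: it is quoted as a known theorem of Mader with a citation, and is used as a black box (only the remark that Carmesin later improved $4k$ to $(3+\frac{1}{3})k$ accompanies it). So there is no "paper proof" to match; what you have written is a self-contained proof of the citation, and it is essentially Mader's original extremal argument (the version one finds in Diestel, run with a fixed density threshold rather than $\varepsilon(G)$): you in fact establish the sharp statement that any graph on at least $2k+1$ vertices with more than $(2k-1)(n-k)$ edges contains a $(k+1)$-connected induced subgraph, which is stronger than the average-degree-$4k$ formulation the paper needs. All the steps check out: the initial density estimate (though the displayed ``$=$'' should be ``$\geq$'', since you only know $|E(G)|\geq 2k|V(G)|$), the base case $|V(H)|=2k+1$ forcing $K_{2k+1}$ (which is $2k\geq k+1$ connected), the minimum-degree bound $\delta(H)\geq 2k$ from vertex-minimality, and the separation case, where $\delta(H)\geq 2k$ indeed guarantees $|X\cup S|,|Y\cup S|\geq 2k+1$, both parts are proper induced subgraphs, and the double-counted edges inside $S$ only help the inequality. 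Working with induced subgraphs throughout is legitimate (and even slightly cleaner than the usual subgraph version, since connectivity is monotone under adding edges), and it delivers exactly the induced conclusion the paper's statement requires, with $2k+1>2k$ vertices.
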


Note that a $(k + 1)$-connected subgraph has minimum (and thus average) degree at least $k$.
In $C_4$-free graphs, graphs with huge connectivity have large rank-connectivity.
We have optimized the following lemma for simplicity rather than to obtain the best bounds.

\begin{lemma}\label{lem:rankconnect1}
    Every $2^k$-connected $C_4$-free graph is $k$-rank-connected.
\end{lemma}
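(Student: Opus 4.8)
The plan is to prove the contrapositive: if $G$ is a $C_4$-free graph that is \emph{not} $(k+1)$-rank-connected, then $G$ is not $k$-connected. So suppose $G$ has a separation $(A,B)$ of rank $\ell$ for some $\ell \in \{1,\dots,k-1\}$; thus the cut-rank of $(A,B)$ is at most $\ell-1$, and $|A|,|B| \geq \ell$. I want to extract from this a vertex cut of size at most $\ell - 1 \leq k-1$, which will show $G$ is not $k$-connected (being careful about the trivial cases where $G$ is complete or too small, which $C_4$-freeness and the size bounds $|A|,|B|\ge\ell$ rule out in the relevant range).

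The key observation is the interaction between $C_4$-freeness and low cut-rank. Let $D$ be the $A \times B$ adjacency matrix over $\mathbb{Z}_2$, so $\operatorname{rank}(D) \leq \ell - 1$. First I would handle the case where $D$ is the zero matrix: then there are no edges between $A$ and $B$, so $G$ is disconnected and certainly not $k$-connected (here $|A|,|B|\ge \ell \ge 1$ ensures both sides are nonempty). Otherwise $\operatorname{rank}(D) = r$ for some $1 \leq r \leq \ell-1$. The crucial point is that in a $C_4$-free graph, two distinct vertices of $A$ have at most one common neighbour in $B$; equivalently, any two distinct rows of $D$ share at most one common $1$-entry, i.e.\ their supports meet in at most one column. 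I would use this together with the rank bound to locate a small set of columns (vertices of $B$) that dominates all the edges across the cut.

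Concretely, pick a set $R$ of $r$ rows of $D$ forming a basis of the row space. Every row of $D$ is a $\mathbb{Z}_2$-combination of these $r$ basis rows. Now consider the union $S \subseteq B$ of the supports of these $r$ basis rows restricted cleverly — the idea being that any column not ``seen'' by the basis rows is a zero column and hence corresponds to a vertex of $B$ with no neighbour in $A$. More carefully: if a column $b$ of $D$ is nonzero, then some basis row is nonzero in column $b$ (since the basis rows span, a column that is zero on all basis rows is zero on all rows). By $C_4$-freeness, each pair of basis rows overlaps in at most one column, so by inclusion–exclusion the basis rows can have at most... — rather than bound the whole support, the clean move is: let $S$ be the set of vertices $b \in B$ such that at least two of the $r$ basis rows are $1$ in column $b$, together with handling the columns hit by exactly one basis row. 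The pairs of basis rows number $\binom{r}{2}$, each contributing at most one such column, but this gives a quadratic bound, which is too weak.

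So the main obstacle — and where I expect the real work to be — is getting a \emph{linear} (size $\leq \ell-1$) vertex cut rather than a quadratic one. The right approach is surely not to bound the support of a fixed basis but to argue directly: take a column $b^* \in B$ with a neighbour in $A$ whose column vector $D_{\cdot, b^*}$ has minimal nonzero support, or more likely to argue that since every row is determined by its values on $r$ coordinates (a "check set" of $r$ columns forming a basis of the column space), and $C_4$-freeness forces rows to be nearly disjoint, one side of the separation must actually be small or the cut must route through few vertices. I would look for $r$ columns $b_1,\dots,b_r$ whose corresponding column vectors form a basis of the column space of $D$; then every column is a combination of these, so every vertex of $B$ outside $\{b_1,\dots,b_r\}$ has the property that its neighbourhood in $A$ is a $\mathbb{Z}_2$-combination of $N(b_1),\dots,N(b_r)$. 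The set $\{b_1,\dots,b_r\}$ has size $r \leq \ell - 1 \leq k-1$; I would then show that deleting it, combined with the structure forced by $C_4$-freeness, disconnects $A \setminus$ (something) from $B \setminus$ (something), yielding a cut of size $\leq k-1$ and contradicting $k$-connectivity. Verifying that this genuinely disconnects the graph — ensuring the two sides left behind are both nonempty, which is where $|A|, |B| \geq \ell$ gets used — is the delicate endgame.
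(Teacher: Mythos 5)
Your reduction to the contrapositive and your bookkeeping of the definitions are fine (a separation of rank at most $k$ gives cut-rank at most $k-1$ with $|A|,|B|$ large, and $C_4$-freeness says any two rows of $D$ share at most one common $1$), but the proof stops exactly where the lemma begins: you never produce a vertex cut of size less than $k$. Your first concrete attempt (columns covered by at least two basis rows) you yourself discard because it only gives a bound of order $\binom{r}{2}$; your fallback --- delete $r$ vertices $b_1,\dots,b_r$ of $B$ whose columns form a basis of the column space and assert that this, ``combined with the structure forced by $C_4$-freeness'', disconnects the graph --- is precisely the content of the lemma and is left unproved. It also cannot work in the form you describe: deleting $\{b_1,\dots,b_r\}$ removes no edge incident to the remaining vertices of $B$, and a non-basis vertex $b$ whose neighbourhood $N(b)\cap A$ is a symmetric difference of several $N(b_i)\cap A$ can still have up to $r$ neighbours in $A$ afterwards, so cross edges survive and nothing is separated. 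Relatedly, you are trying to locate the cut entirely on the $B$ side, whereas in general the cut must use vertices of both sides.

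For comparison, the paper's proof takes a partition $A_0,A_1,\dots,A_{k'}$ of $A$ with $k'<k$ such that every $N(b)\cap A$ is a union of the parts $A_i$ with $i\ge 1$, and builds the cut $X$ from both sides: all singleton parts, plus, for each part $A_i$ of size at least two, the vertices of $B$ whose neighbourhood contains $A_i$. $C_4$-freeness is applied on the $B$ side --- two distinct vertices of $B$ cannot both contain a part of size at least two in their neighbourhoods --- so each big part contributes at most one vertex to $X$ and $|X|\le k'<k$. Then every $b\in B\setminus X$ has $N(b)\cap A$ contained in the union of the singleton parts, hence in $X$, so $X$ separates $A\setminus X$ from $B\setminus X$, and $|A|,|B|>k'$ keeps both sides nonempty, contradicting $k$-connectivity. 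This mixed-sides cut, and the use of $C_4$-freeness to bound the number of dominating $B$-vertices per part (rather than your row-intersection observation), is the idea missing from your sketch.
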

\begin{proof}
    Trivially this is true for $k=1$, so we may assume that $k>1$.
    Let $G$ be a $2^k$-connected $C_4$-free graph and suppose, for the sake of contradiction, that $G$ contains a separation $(A,B)$ of rank $k'<k$.
    Since $G$ is $2^k$-connected, we have that $|V(G)| > 2^k$, in particular, we may assume without loss of generality that $|A|>  2^{k'}$.
    Note also that $|B|>k'$ by definition of a separation of rank $k'$.
    
    Since the rank of the binary adjacency matrix between $A$ and $B$ is at most $k'$, it follows that there exists a partition of $A$ into subsets $A_1, \ldots , A_{2^{k'}}$ such that for every $b\in B$, there exists some $I\subseteq \{1, \ldots , 2^{k'}\}$ so that $N(b)\cap A = \bigcup_{i\in I} A_i$.
    Let $S$ be the integers $1\le i \le 2^{k'}$ such that $|A_i| = 1$, and
    let $T$ be the integers $1\le i \le 2^{k'}$ such that $|A_i| > 1$.
    Let $A_S = \bigcup_{i\in S} A_i$. Then $|A_S|=|S|\le 2^{k'} <|A|$.
    Let $A_T = \bigcup_{i\in T} A_i$, and 
    let $B_T$ be the vertices $b\in B$ such that $A_i\subseteq N(b)$ for some $i\in T$.
    Since $G$ is $C_4$-free, we have that for each $i\in T$, there is a single vertex $b\in B$ with neighbours in $A_i$.
    This in particular implies that the rank of the binary adjacency matrix between $A_T$ and $B$ is equal to $|B_T|$, and therefore that $|B_T|\le k' < |B|$.
    Let $X=A_S\cup B_T$. Then $|X|\le 2^{k'} + k' <2^k$.
    As $|A_S|<|A|$ and $|B_T|<|B|$, it follows that both $A\backslash X$ and $B \backslash X$ are non-empty.
    Observe further that $X$ is a vertex cut separating $A\backslash X$ from $B \backslash X$ in $G$ since for every $b\in B \backslash X = B\backslash B_T$, we have that $N(b)\cap A \subseteq A_S \subseteq X$.
    This contradicts the fact that $G$ is $2^k$-connected, and so the lemma follows.
\end{proof}

Combining \cref{mader} with \cref{lem:rankconnect1}, we obtain the following. 

\begin{lemma}\label{lem:rankconnect2}
    For every positive integer $k$, each $C_4$-free graph with average degree at least $2^{k+2}$ contains a $C_4$-free $k$-rank-connected induced subgraph with average degree at least $2^k$.
\end{lemma}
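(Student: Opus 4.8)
The plan is to apply Mader's theorem (\cref{mader}) and then \cref{lem:rankconnect1} in sequence, so that the lemma becomes a direct composition of the two cited results. Given a $C_4$-free graph $G$ with average degree at least $4k$, I would first invoke \cref{mader} with parameter $k$ to obtain an induced subgraph $G'$ of $G$ that is $(k+1)$-connected and has more than $2k$ vertices. Since $G'$ is an induced subgraph of the $C_4$-free graph $G$, it is itself $C_4$-free. Moreover, as remarked after \cref{mader}, a $(k+1)$-connected graph has minimum (and hence average) degree at least $k+1$, so $G'$ already has the required average degree.

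It then remains to verify that $G'$ is $(k+2)$-rank-connected. For this I would apply \cref{lem:rankconnect1} with its parameter "$k$" instantiated as $k+1$: since $G'$ is $(k+1)$-connected and $C_4$-free, the lemma yields that $G'$ is $(k+2)$-rank-connected. Thus $G'$ is the desired subgraph. The only points needing care are the bookkeeping of the indices — the "$k$" of \cref{lem:rankconnect1} corresponds to $k+1$ in the present statement, which produces the shift to $(k+2)$-rank-connectivity — and the elementary observation that $C_4$-freeness is inherited by induced subgraphs. I do not expect any substantive obstacle; the content of the lemma lies entirely in \cref{mader} and \cref{lem:rankconnect1}, and this step merely chains them together.
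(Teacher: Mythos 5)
Your proof is correct and is exactly the paper's argument: the paper obtains \cref{lem:rankconnect2} by combining \cref{mader} (giving a $(k+1)$-connected, hence minimum degree $k+1$, induced subgraph, which is still $C_4$-free) with \cref{lem:rankconnect1} applied with its parameter set to $k+1$. The index bookkeeping you describe matches the intended composition.
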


From \cref{mccarty} and \cref{lem:rankconnect2}, we obtain the following, which is the main result of this section.

\begin{lemma}\label{lem:rankconnectmain}
    There is a function $f$ such that for all positive integers $d,k,s$, each $K_{s,s}$-free graph with average degree at least $f(d,k,s)$ has a bipartite $k$-rank-connected $C_4$-free induced subgraph with average degree at least $d$.
\end{lemma}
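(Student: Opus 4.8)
The plan is to chain the two previously established reductions in the obvious way, being careful about the order in which the hypotheses are applied. Concretely, I would define $f(d,k,s) \eqdef f_{\ref{mccarty}}\bigl(f_{\ref{lem:rankconnect2}}(d,k),\,s\bigr)$, where $f_{\ref{mccarty}}$ is the function from \cref{mccarty} and $f_{\ref{lem:rankconnect2}}$ is an explicit function extracted from \cref{lem:rankconnect2}. The point of \cref{lem:rankconnect2} is that, given the target average degree $d$ and rank-connectivity $k$, there is a threshold $f_{\ref{lem:rankconnect2}}(d,k)$ (namely $4\max\{d,k\}$, since we need $k+1\geq d$ and $k+2\geq k$, i.e.\ it suffices to take $\max\{d-1,k-2\}$ in the role of ``$k$'' in the statement of \cref{lem:rankconnect2}) such that every $C_4$-free graph with average degree at least $f_{\ref{lem:rankconnect2}}(d,k)$ has a $C_4$-free $k$-rank-connected induced subgraph with average degree at least $d$.

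The steps, in order: (1) Let $G$ be a $K_{s,s}$-free graph with average degree at least $f(d,k,s)$. (2) Apply \cref{mccarty} with target average degree $f_{\ref{lem:rankconnect2}}(d,k)$ to obtain a bipartite, $C_4$-free induced subgraph $G'$ of $G$ with average degree at least $f_{\ref{lem:rankconnect2}}(d,k)$. (3) Apply \cref{lem:rankconnect2} to $G'$ (adjusting the parameter so that the conclusion gives rank-connectivity at least $k$ and average degree at least $d$) to obtain a $C_4$-free, $k$-rank-connected induced subgraph $G''$ of $G'$ with average degree at least $d$. (4) Observe that $G''$ is an induced subgraph of $G$, and since $G'$ is bipartite and $G''$ is an induced subgraph of $G'$, $G''$ is bipartite as well. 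Hence $G''$ is the desired bipartite $k$-rank-connected $C_4$-free induced subgraph of $G$ with average degree at least $d$.

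The only genuine subtlety — and I would not call it an obstacle so much as a bookkeeping point — is the interaction between the two thresholds and making sure $C_4$-freeness is preserved along the way: \cref{mccarty} is what produces $C_4$-freeness (and bipartiteness), and then \cref{lem:rankconnect2} requires a $C_4$-free input, which is exactly what \cref{mccarty} hands us, and it outputs a $C_4$-free induced subgraph, so both properties persist. Bipartiteness is trivially inherited by induced subgraphs, so it survives the second step even though \cref{lem:rankconnect2} says nothing about it. One should also double-check that ``induced subgraph of an induced subgraph is an induced subgraph'' is being used, which is immediate. No new ideas are required beyond composing the lemmas; the proof is essentially a one-paragraph argument.
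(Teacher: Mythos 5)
Your proof is correct and is exactly the paper's argument: the paper derives \cref{lem:rankconnectmain} by composing \cref{mccarty} with \cref{lem:rankconnect2} in just this way (it does not even spell out the bookkeeping you provide). The only nitpick is the typo ``$k+2\geq k$'', which should read that the parameter $m$ plugged into \cref{lem:rankconnect2} must satisfy $m+2\geq k$, consistent with your choice $m=\max\{d-1,k-2\}$.
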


\section{Pivot-minors and binary matroids}\label{sec:binary}

The proof of Theorem~\ref{main} relies on the following structural hypothesis which follows from the announced structure theorem of Geelen, Gerards, and Whittle~\cite{geelen2015highly} for highly connected binary matroids (see \cref{thm:mat_mc_str}).

\begin{hypothesis}\label{thm:bip_mc_str}
    For each bipartite graph $H$, there exist positive integers $k,t$ so that, for every $k$-rank-connected bipartite graph $G=(A,B)$ that does not contain $H$ as a pivot-minor, there are partitions $A_1,\dots, A_t$ and $B_1,\dots, B_t$ of $A$ and $B$ respectively such that, for all $i,j \in\{1,\dots, t\}$, the graph induced on $A_i\cup B_j$ is either a graphic fundamental graph or the bipartite complement of a graphic fundamental graph.
\end{hypothesis}
\cref{main} follows immediately from \cref{lem:rankconnectmain,thm:bip_mc_str,lem:struct}.

The rest of this section is devoted to explaining how the structure theorem of Geelen, Gerards, and Whittle~\cite{geelen2015highly} (see \cref{thm:mat_mc_str}) implies \cref{thm:bip_mc_str}.
We remark that this connection is well known within the matroid community.
For a more comprehensive introduction to matroids, see~\cite{WIMatroid, Oxley}.

Let $A$ be a matrix over the binary field, with columns indexed by a set $E$.
Let $\mathcal{C}\subseteq 2^E$ correspond to collections of columns of $A$ that form minimal linear dependencies;
that is to say $X\subseteq E$ is in $\mathcal{C}$ when the columns of $A$ indexed by $X$ form a linear dependency, but no proper subset of these columns are linearly dependent.
The pair $M=(E, \mathcal{C})$ is a \emph{binary matroid}.
Take some $B\subseteq E$ whose corresponding columns are maximally independent (a \emph{basis} of $M$).
By row operations and permuting columns, we may assume that $A$ is a $B\times E$ matrix of the form
$[I|D]$ where $D$ is a $B\times (E-B)$ binary matrix and $I$ is the $B\times B$ identity matrix.
The bipartite graph with biadjacency matrix $D$ between $B$ and $(E-B)$ is called the \emph{fundamental graph of $M$ with respect to $B$}.
Note in particular that every bipartite graph is a fundamental graph of some binary matroid.

Let $G$ be a graph with a spanning tree $T$ and let $D$ be the fundamental matrix of $G$ with respect to $T$.
The \emph{graphic matroid of $G$} is the binary matroid with representation $[I|D]$,
while the \emph{cographic matroid of $G$} is the binary matroid with representation $[I|D^\top]$. Note that these two matroids have the same bipartite fundamental graph but where the sides of the bipartition are swapped (according to $E(T)$ being a basis of the graphic matroid of $G$ and $E(G)\backslash E(T)$ being a basis of the cographic matroid of $G$).
A \emph{(co)graphic matroid} is a matroid that can be constructed from a graph in this way. As a consequence, the fundamental graphs of graphic and cographic matroids
are isomorphic to graphic fundamental graphs.

We remark that more generally, the fundamental graphs of a binary matroid $M$ are the same as the fundamental graphs of its dual matroid $M^*$. Cographic matroids being the duals of graphic matroids are a special case of this. More generally, the fundamental graph of a binary matroid $M$ with respect to a basis $B$ is the same as the fundamental graph of the dual $M^*$ with respect to the basis $E(M)-B$ (just with the sides of the bipartition swapped).

Consider a binary matroid $M=(E,\mathcal{C})$ with a basis $B$ and representation given by the binary $B\times E$-matrix $[I|D]$.
For $x\in B$ and $y\in E-B$ where the $x$-entry of column $y$, namely $D_{x,y}$, is $1$, consider using $x$-row to eliminate all other non-zero entries of column $y$.
This transformation looks like the following:
\[
\left[\begin{array}{cc|cc}
1&0 & 1&\beta\\
0&I & \alpha& D'
\end{array}\right]
\longrightarrow
\left[\begin{array}{cc|cc}
1&0 		& 1&\beta\\
-\alpha&I 	& 0& D'-\alpha\beta
\end{array}\right]
.\]
Note these row operations do not change the linear dependencies of the columns, and so we have that $B-x+y$ is a basis and, after swapping columns $x$ and $y$, $M$ has representation
\[\left[\begin{array}{cc|cc}
1&0 		& 1&\beta\\
0&I 	& -\alpha& D'-\alpha\beta
\end{array}\right]
.\]
This matrix transformation corresponds to a change of the basis with which we consider our representation.
Consider the bipartite graphs $G_1$ and $G_2$ on vertex set $E$ with $G_1$ given by the $B\times (E-B)$ biadjacency matrix $D_1=\left[\begin{array}{cc}
1&\beta\\
\alpha& D'
\end{array}\right]$ and, taking $B'=B-x+y$, $G_2$ given by the $B'\times (E-B')$ biadjacency matrix $D_2=\left[\begin{array}{cc}
1&\beta\\
-\alpha& D'-\alpha\beta
\end{array}\right]$.
Note that for $i\in B-x$ and $j\in E-B-y$, we have $(\alpha\beta)_{i,j}=1$ precisely when $i$ is adjacent to $y$ and $j$ is adjacent to $x$.
So $G_2$ is obtained from $G_1$ by complementing the edges between $N(x)-y$ and $N(y)-x$ and then swapping the labels $x$ and $y$.
In general, for a (not necessarily bipartite) graph $G$, we \emph{pivot} the edge $xy$ by: (i) taking $V_1=N(x)-N(y)-y$, $V_2=N(y)-N(x)-x$, $V_3=N(x)\cap N(y)$; (ii) complementing the edges between $V_1$ and $V_2$, between $V_2$ and $V_3$, between $V_3$ and $V_1$; (iii) and then swapping the labels $x$ and $y$ (see Figure~\ref{fig:pivoting}). Note that, in a bipartite graph, pivoting gives the bipartite graph previously described.
If one graph can be obtained from another graph by a sequence of pivots, then we say that they are \emph{pivot-equivalent}.
By the above, we have that each graph that is pivot-equivalent to a fundamental graph of a matroid is also a fundamental graph of the same matroid, but with respect to a different basis. 

\begin{figure}[t]
        \centering
        \includegraphics{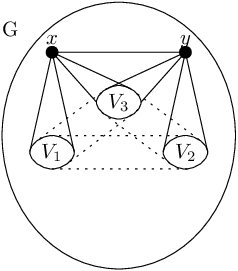}
        \caption{Pivoting the edge $xy$ in $G$ corresponds to edge-complementation within the three dashed regions and swapping the labels of $x$ and $y$.}\label{fig:pivoting}
\end{figure}

In a matroid with representation given by the binary $B\times E$-matrix $[I|D]$, we can \emph{contract} an element of $B$ by deleting the corresponding row and column, and \emph{delete} an element of $E-B$ by deleting the corresponding column.
To contract or delete other elements, we can change the basis of our representation, which as discussed, has a corresponding pivot-equivalent fundamental graph.
The result of a sequence of contractions and deletions is a \emph{minor} of the original matroid.
Thus a pivot-minor of a fundamental graph of a binary matroid $M$ corresponds to a fundamental graph of a minor of $M$.

For binary $r\times s$-matrices $D_1,D_2$, we say that $D_1$ is a \emph{$p$-perturbation} of $D_2$ when the matrix $D_1-D_2$ has rank at most $p$. For a matrix $D$, let $D[S,T]$ denotes the submatrix of $D$ restricted to rows indexed by $S$ and columns indexed by $T$. 
\begin{lemma}\label{lem:pert_to_part}
	For all positive integers $p,r,s$, if $C$ is a binary $r\times s$-matrix with rank $p$,
	then for $t= 2^p$, there is a partition $A_1,\dots,A_t$ of $[r]$ and a partition $B_1,\dots,B_t$ of $[s]$ so that for each $i,j\in\{1,\dots,t\}$ the  submatrix $C[A_i,B_j]$ is either all $0$'s or all $1$'s.
\end{lemma}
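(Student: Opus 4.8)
The plan is to partition the rows of $C$ by equality of row vectors and the columns by equality of column vectors, and then observe that each resulting block is forced to be constant.

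First I would use that $C$ has rank $p$ over $\mathbb{Z}_2$. Its row space is then a $p$-dimensional subspace of $\mathbb{Z}_2^s$, and hence has exactly $2^p$ elements; in particular, among the $r$ rows of $C$ there are at most $2^p = t$ distinct row vectors. Define an equivalence relation on $[r]$ by declaring $a\sim a'$ when rows $a$ and $a'$ of $C$ are equal; this has at most $t$ classes, and padding with empty parts if necessary yields a partition $A_1,\dots,A_t$ of $[r]$ in which all rows indexed by a common part agree. Symmetrically, since the column rank of $C$ also equals $p$, there are at most $t$ distinct columns, and the analogous construction gives a partition $B_1,\dots,B_t$ of $[s]$ in which all columns indexed by a common part agree.

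It then remains to check that each block $C[A_i,B_j]$ is constant. Fixing $i,j$ and any $a,a'\in A_i$ and $b,b'\in B_j$ (the claim being vacuous if $A_i$ or $B_j$ is empty), we have $C_{a,b}=C_{a',b}$ because rows $a$ and $a'$ coincide, and $C_{a',b}=C_{a',b'}$ because columns $b$ and $b'$ coincide; hence $C_{a,b}=C_{a',b'}$, so every entry of $C[A_i,B_j]$ equals the same value, which is $0$ or $1$. There is no real obstacle here — the argument is a short counting argument — but the one point worth flagging is that both partitions are needed simultaneously: the row partition alone makes each block have identical rows but not necessarily constant entries, and it is the combination with the column partition that forces constancy.
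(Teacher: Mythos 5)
Your proof is correct and follows essentially the same route as the paper: both partition the rows and columns of $C$ by equality of row/column vectors, bound the number of classes by $2^p$ via the dimension of the row and column spaces, and note that each block then has all rows and all columns equal, hence is constant. Your write-up just spells out the constancy check and the padding by empty parts a bit more explicitly.
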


\begin{proof}
	As $C$ has rank $p$, the row- and column-spaces of $C$ are binary vector space of dimension $p$.
    So, there are at most $2^p=t$ distinct row vectors, and similarly at most $2^p=t$ distinct column vectors.
    Therefore, there is a partition $A_1,\dots,A_t$ of $[r]$ where for each $1\le i \le t$, the row vectors indexed by elements of $A_i$ are all equal.
    Similarly, there is a partition $B_1,\dots,B_t$ of $[s]$ where for each $1\le i \le t$, the column vectors indexed by elements of $B_i$ are all equal.
	It follows that for each $i,j\in\{1,\dots,t\}$ the  submatrix $C[A_i,B_j]$ has all rows and all columns the same and is therefore a constant matrix, as desired.
\end{proof}

For bipartite graphs $G_1$ and $G_2$ on the same vertex set and with the same bipartition $(A,B)$, we say that $G_1$ is a \emph{$p$-perturbation} of $G_2$ when the binary biadjacency matrix $D_1$ of $G_1$ is a $p$-perturbation of the binary biadjacency matrix $D_2$ of $G_2$.
As a corollary of Lemma~\ref{lem:pert_to_part}, we have the following:
\begin{lemma}\label{lem:pert_to_part_comp}
	For every non-negative integer $p$, if $G_1$ and $G_2$ are graphs with bipartition $(A,B)$, and $G_1$ is a $p$-perturbation of $G_2$, then there is a partition $A_1,\dots,A_t$ of $A$ and a partition $B_1,\dots,B_t$ of $B$ where $t=2^p$ such that, for all $i,j\in\{1,\dots,t\}$, the subgraph $G_1[A_i\cup B_j]$ is either equal to $G_2[A_i\cup B_j]$ or its bipartite complement.
\end{lemma}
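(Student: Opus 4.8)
The plan is to derive \cref{lem:pert_to_part_comp} directly from \cref{lem:pert_to_part}, via the standard dictionary between bipartite graphs with a fixed bipartition and their binary biadjacency matrices. First I would let $D_1$ and $D_2$ be the binary biadjacency matrices of $G_1$ and $G_2$ between $A$ and $B$, and set $C = D_1 - D_2$ with arithmetic over $\mathbb{Z}_2$. By the definition of a $p$-perturbation, $C$ has rank $p' \le p$. Applying \cref{lem:pert_to_part} to $C$ (of rank exactly $p'$) yields a partition $A_1,\dots,A_{t'}$ of $A$ and a partition $B_1,\dots,B_{t'}$ of $B$ with $t' = 2^{p'}$ such that each submatrix $C[A_i,B_j]$ is constant (all $0$'s or all $1$'s). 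To reach the stated value $t = 2^p$, I would simply append $2^{p} - 2^{p'}$ empty classes to each of the two partitions; this preserves every block-constancy condition.

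Next I would translate the two possible values of each constant block $C[A_i,B_j]$ back into graph language. Fix $i,j\in\{1,\dots,t\}$. If $C[A_i,B_j]$ is the all-$0$'s matrix, then $(D_1)_{ab} = (D_2)_{ab}$ for all $a\in A_i$, $b\in B_j$, which is exactly the statement that $G_1[A_i\cup B_j] = G_2[A_i\cup B_j]$. If $C[A_i,B_j]$ is the all-$1$'s matrix, then $(D_1)_{ab} \neq (D_2)_{ab}$ for all such $a,b$, i.e.\ $ab$ is an edge of $G_1$ if and only if it is a non-edge of $G_2$; since both $G_1[A_i\cup B_j]$ and $G_2[A_i\cup B_j]$ are bipartite graphs with sides $A_i$ and $B_j$, this says precisely that $G_1[A_i\cup B_j]$ is the bipartite complement of $G_2[A_i\cup B_j]$. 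Ranging over all $i,j$ gives the conclusion.

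I do not expect any real obstacle: the lemma is essentially a restatement of \cref{lem:pert_to_part} through the biadjacency-matrix correspondence. The only points requiring a word of care are the bookkeeping passage from ``rank exactly $p$'' in \cref{lem:pert_to_part} to ``rank at most $p$'' here (handled by the empty-class padding above), and the degenerate case $p = 0$, where $C = 0$, $t = 1$, and the single block trivially satisfies $G_1[A\cup B] = G_2[A\cup B]$.
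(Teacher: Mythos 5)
Your proof is correct and matches the paper's intent exactly: the paper presents \cref{lem:pert_to_part_comp} as an immediate corollary of \cref{lem:pert_to_part} via the biadjacency-matrix dictionary, without even writing out the translation. Your handling of the ``rank at most $p$'' versus ``rank exactly $p$'' discrepancy by padding with empty classes is a reasonable way to tidy up a detail the paper glosses over.
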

For binary matroids $M_1$ and $M_2$ with the same rank and on the same ground set, 
we say $M_1$ is a \emph{$p$-perturbation} of $M_2$ when, up to column permutations, there is a representation $[I|D_1]$ of $M_1$ and a representation $[I|D_2]$ of $M_2$ such that $D_1$ is a \emph{$p$-perturbation} of $D_2$.
We remark that in~\cite{geelen2015highly}, $p$-perturbations are defined a bit differently allowing more general representations than those of the form $[I|D]$, however by considering row operations, it is easy to see that these definitions are equivalent.

Let $M$ be a binary matroid with ground set $E$ and binary representation $[I|D]$ with respect to some basis $B$.
For $X\subseteq E$ and taking $X_B=B\cap X$, $Y_B=B-X$, $X_C=X-B$, and $Y_C=E-B-X$,
define $\lambda(X)=\text{rk}(D[X_B,Y_C])+\text{rk}(D[Y_B,X_C])$.
For the bipartite graph $G$ with the $B\times (E-B)$ biadjacency matrix $D$, this is equivalent to the cut-rank of $(X, E(G)-X)$, as defined in Section~\ref{sec:rank}. 
A binary matroid $(E,\mathcal{C})$ is \emph{$k$-connected}, when for each $\ell\in\{1,\dots,k-1\}$, there is no $X$ with $|X|,|E-X|\geq \ell$ and $\lambda(X)<\ell$.
As this definition coincides with the definition of rank-connectivity (except that it is 1 less), we have the following which has also been proven for instance in \cite{bouchet1989connectivity,oum2005rank}.

\begin{lemma}\label{rmk:con}
    Let $M$ be a binary matroid and $G$ be a fundamental graph of $M$. For each positive integer $k$,  $M$ is $k$-connected if and only if $G$ is $(k+1)$-rank-connected.
\end{lemma}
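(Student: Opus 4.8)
The plan is to unwind the two definitions and check that they coincide term for term, using the identification of the matroid connectivity function $\lambda$ with the cut-rank that is already observed just above the lemma. First I would fix a basis $B$ of $M$ with respect to which $G$ is the fundamental graph, with representation $[I\mid D]$; then $V(G)=E$, the bipartition of $G$ is $(B,\ E-B)$, and $D$ is the $B\times(E-B)$ biadjacency matrix of $G$. I would evaluate the matroid connectivity using this same $B$ (since $\lambda$ is a matroid invariant, this does not depend on the choice of $B$, so proving the statement for the fundamental graph coming from $B$ suffices for every fundamental graph of $M$).

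The one point that needs an actual computation is the cut-rank identity. For $X\subseteq E$ put $X_B=B\cap X$, $Y_B=B-X$, $X_C=X-B$, $Y_C=E-B-X$. Since $G$ is bipartite with parts $B$ and $E-B$, an edge of $G$ with exactly one end in $X$ either joins $X_B$ to $Y_C$ or joins $X_C$ to $Y_B$. Hence, after reordering rows and columns, the adjacency matrix of $G$ between $X$ and $E-X$ is block diagonal with blocks $D[X_B,Y_C]$ and $D[Y_B,X_C]^{\top}$, so its rank — which is the cut-rank of the vertex partition $(X,\ E-X)$ — equals $\text{rk}(D[X_B,Y_C])+\text{rk}(D[Y_B,X_C])=\lambda(X)$.

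With this identity between $\lambda$ and cut-rank in hand, the lemma follows by matching quantifiers. Vertex partitions $(X,\ E-X)$ of $V(G)$ correspond bijectively to subsets $X\subseteq E$, and under this correspondence the condition ``$|X|,|E-X|\ge\ell$ and $\lambda(X)<\ell$'' is exactly the condition that $(X,\ E-X)$ is a separation of $G$ of rank $\ell$. Therefore, for each fixed $\ell$, there is no such $X$ if and only if $G$ has no separation of rank $\ell$; conjoining over $\ell\in\{1,\dots,k-1\}$ shows that $M$ has connectivity $k$ exactly when $G$ is $k$-rank-connected, i.e.\ has rank-connectivity $k$. I do not expect any real obstacle here: the only steps requiring care are the block-diagonal structure computation (which is where bipartiteness of a fundamental graph enters) and, if one wishes to be scrupulous about the definition of matroid connectivity, the basis-independence of $\lambda$.
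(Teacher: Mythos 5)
Your proposal is correct and follows essentially the same route as the paper, which treats the lemma as an immediate consequence of the observation (made just before its statement) that $\lambda(X)$ equals the cut-rank of $(X,E-X)$ in the fundamental graph; your block-decomposition computation and the remark on basis-independence simply spell out the details the paper leaves implicit.
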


The following result has been announced by Geelen, Gerards, and Whittle \cite{geelen2015highly}.

\begin{hypothesis}[Geelen, Gerards, and Whittle \cite{geelen2015highly}]\label{thm:mat_mc_str}
	Let $\mathcal{M}$ be a proper minor-closed class of binary matroids.
	Then there exist positive integers $k,p$ such that each $k$-connected member of $\mathcal{M}$ is a $p$-perturbation of a graphic matroid or a cographic matroid.
\end{hypothesis}

We remark that the version given in \cite{geelen2015highly} uses vertical connectivity rather than Tutte connectivity as in the present paper. This is stronger than the above version since every vertically $k$-connected matroid is Tutte $k$-connected.
For further discussion of vertical and Tutte connectivity in matroid structure theorems, see \cite{grace2018perturbations}.

The class of bipartite graphs excluding a bipartite graph $H$ as a pivot-minor corresponds to the fundamental graphs of the binary matroids that exclude the binary matroids with $H$ as their fundamental graph (which is just some binary matroid and its dual). As the fundamental graphs of graphic matroids and cographic matroids are graphic fundamental graphs, \cref{thm:bip_mc_str}, now is a direct implication of \cref{thm:mat_mc_str}, \cref{rmk:con}, and \cref{lem:pert_to_part_comp}.
Thus, \cref{main} holds assuming \cref{thm:mat_mc_str}.

\section{Open problems}\label{sec:open}

A \emph{subdivision} of a graph $G$ is a graph obtainable from $G$ by replacing edges by possibly
longer paths.
Kuhn and Osthus \cite{kuhn2004induced} proved that, for every graph $H$, the class of graphs that do not contain an induced subgraph isomorphic to a subdivision of $H$ is degree-bounded.
We conjecture a common strengthening of both Kuhn and Osthus's \cite{kuhn2004induced} theorem and \cref{main}.
An \emph{odd subdivision} of a graph $G$ is a graph obtainable from $G$ by replacing each edge
by a path of odd length.

\begin{conjecture}\label{odd}
    For every bipartite graph $H$ and positive integer $s$, the $K_{s,s}$-free graphs with no induced odd subdivision of $H$ have bounded average degree.
\end{conjecture}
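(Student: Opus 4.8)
The plan is to follow the same three-step template that proves \cref{main}, but to replace the final appeal to the pivot-minor structure theorem with an argument that extracts a \emph{concrete} induced odd subdivision. The point to keep in mind throughout is that, since $H$ is bipartite, every odd subdivision of $H$ is again bipartite; consequently an induced odd subdivision of $H$ inside a graph $G$ is itself a bipartite induced subgraph of $G$, and the property of having no induced odd subdivision of $H$ is inherited by all induced subgraphs.

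First I would set up the framework. If some $K_{s,s}$-free graph with no induced odd subdivision of $H$ had average degree at least $f(d,k,s)$, then by \cref{lem:rankconnectmain} it would contain a $k$-rank-connected bipartite $C_4$-free induced subgraph $G'$ with average degree at least $d$, and $G'$ would still have no induced odd subdivision of $H$. So it suffices to bound the average degree of $k$-rank-connected bipartite $C_4$-free graphs that contain no induced odd subdivision of $H$.

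Second I would extract structure. Fix a ``robust'' bipartite pattern $H^\ast$ that pivot-minor-contains $H$ with room to spare --- for concreteness, a large enough blow-up or wall-like graph --- and choose $d$ and $k$ large in terms of $H^\ast$ and $s$. If $G'$ excluded $H^\ast$ as a pivot-minor then \cref{main} would bound its average degree; hence $G'$ contains $H^\ast$ as a pivot-minor. Equivalently, by \cref{rmk:con} and \cref{thm:mat_mc_str} the binary matroid with fundamental graph $G'$ is a bounded perturbation of a graphic or cographic matroid, so (as in \cref{lem:pert_to_part_comp}) $G'$ is, after a bounded partition of its two sides, assembled from graphic fundamental graphs and their bipartite complements; since $G'$ is dense and highly rank-connected, these tree-plus-chord pieces are large and well connected.

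The third step --- converting this into an induced odd subdivision of $H$ --- is where a genuinely new idea is needed, and is the main obstacle. Here one must exploit the $K_{s,s}$-freeness to route the branch paths of a subdivision of $H$ so that they are internally induced and pairwise non-adjacent except at shared branch vertices, in the spirit of Kuhn and Osthus~\cite{kuhn2004induced}; crucially, if the branch vertices are placed in accordance with the bipartition of the host then every branch path is forced to have odd length, which is exactly why the conjecture is stated for odd subdivisions. The difficulty is that pivots are non-local --- a single pivot complements entire bipartite blocks --- so an abstract pivot-minor certificate for $H^\ast$ cannot be read off directly; one has to locate such a certificate inside a controlled part of the structure (for instance a large graphic fundamental graph, where pivoting along a length-three induced path simply shortens it by two, cf.~the proof of \cref{lem:fun}) and then carry out the routing so that it survives the bounded perturbation supplied by \cref{thm:mat_mc_str}. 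Making the matroid structure theorem and the $K_{s,s}$-avoiding routing cooperate seems to me to be the crux of the conjecture.
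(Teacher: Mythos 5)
This statement is a conjecture in the paper: the authors do not prove it, and only remark that it was resolved very recently by Hunter, so there is no proof of \cref{odd} in the paper to compare against. Your proposal also does not prove it, and you say so yourself: the third step, turning the extracted structure into an \emph{induced} odd subdivision of $H$, is exactly the content of the conjecture, and your sketch offers no argument for it beyond ``in the spirit of Kuhn and Osthus.'' Merely knowing that a dense, highly rank-connected, $C_4$-free bipartite graph contains $H$ (or a robust pattern $H^\ast$) as a pivot-minor gives no handle on induced subgraphs: pivots complement whole bipartite blocks, and no lemma in the paper converts a pivot-minor certificate into an induced odd subdivision. So the proposal is an (honest) reduction to the open problem, not a proof.

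There is also a concrete logical flaw in your second step. After the dichotomy ``either $G'$ excludes $H^\ast$ as a pivot-minor (and then \cref{main} bounds its degree) or $G'$ contains $H^\ast$ as a pivot-minor,'' you write that ``equivalently, by \cref{rmk:con} and \cref{thm:mat_mc_str}'' the matroid of $G'$ is a bounded perturbation of a graphic or cographic matroid. That is not available: \cref{thm:mat_mc_str} applies to members of a \emph{proper minor-closed} class of binary matroids, i.e.\ exactly in the branch where $G'$ \emph{excludes} a fixed pivot-minor --- and in that branch \cref{lem:struct} already makes $G'$ sparse, contradicting your choice of $d$, so the structural decomposition into graphic fundamental graphs and their bipartite complements is never simultaneously available with density. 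The class of graphs with no induced odd subdivision of $H$ is hereditary but not pivot-minor-closed, so the structure theorem cannot be invoked for it directly. In short: the only usable outcome of your first two steps is a dense, rank-connected, $C_4$-free bipartite induced subgraph containing $H$ as a pivot-minor, and bridging from there to an induced odd subdivision is the missing idea; note that Hunter's cited proof of \cref{odd} proceeds without the matroid structure theorem at all.
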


The bipartiteness condition on $H$ is necessary as otherwise no bipartite graph would be forbidden.
Observe that \cref{odd} implies Kuhn and Osthus's \cite{kuhn2004induced} theorem since every graph has a subdivision that is bipartite.
It also implies \cref{main} since every odd subdivision of a graph $H$ contains $H$ as a pivot-minor.
We remark that \cref{odd} also relates to a conjecture of Scott and Seymour \cite{scott2020survey} that $K_{s,s}$-free graphs with no induced odd subdivision of a graph $H$ (with $H$ not necessarily bipartite) have bounded chromatic number.

Our proof of \cref{main} relies on Geelen, Gerards, and Whittle's \cite{geelen2015highly} structure theorem for highly connected binary matroids, whose proof is incredibly long and complicated.
It is desirable to have a simpler proof of \cref{main} that does not rely on this structure theorem.
Indeed, a more elementary proof of \cref{main} may lead to progress on \cref{odd}.

Following McCarty's \cite{mccarty2021dense} fundamental theorem (see \cref{mccarty}) on degree-boundedness, further work was done optimizing the resulting bounds in degree-bounded classes \cite{bourneuf2023polynomial,du2023induced,girao2023induced,hunter2024k}.
The current best such bounds are due to Hunter, Milojevi{\'c}, Sudakov, and Tomon \cite{hunter2024k}.
In particular, the bounds in \cref{main} are polynomial \cite{girao2023induced,hunter2024k}, but no uniform bound on the degree of the polynomial are known. 

Du and McCarty \cite{du2024survey} recently conjectured that every proper vertex-minor-closed graph class is linearly degree-bounded.
We further conjecture the natural pivot-minor extension of this conjecture.

\begin{conjecture}\label{linear}
    For each bipartite graph $H$, there exists a constant $C_H>0$ such that every $K_{s,s}$-free graph excluding $H$ as pivot-minor has average degree at most $C_Hs$.
\end{conjecture}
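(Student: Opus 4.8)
A natural route to \cref{linear} is to revisit the proof of \cref{main} and verify that each ingredient loses at most a multiplicative factor depending only on $H$, or already produces a bound linear in $s$. Fix a bipartite graph $H$ and let $k,t$ be the constants supplied by \cref{thm:bip_mc_str}. The conclusion of the argument is already linear in $s$: every induced subgraph of a $K_{s,s}$-free graph excluding $H$ as a pivot-minor is again $K_{s,s}$-free and excludes $H$ as a pivot-minor, so by \cref{thm:bip_mc_str} together with \cref{lem:struct} (whose bound rests on the linear-in-$s$ estimates of \cref{lem:fun} and \cref{lem:cofun}), every $k$-rank-connected bipartite induced subgraph has average degree less than $10t^2 s$. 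The passage to a rank-connected subgraph is likewise cheap: combining \cref{mader}, applied with parameter $\Theta(t^2 s)$, with \cref{lem:rankconnect1} shows that there is a constant $c_H\ge 1$ such that every $C_4$-free graph of average degree at least $c_H s$ contains a $k$-rank-connected $C_4$-free induced subgraph of average degree at least $10 t^2 s + 1$. Hence the only step of the proof of \cref{main} that is not linear in $s$ is the appeal to \cref{mccarty}, which currently provides only a polynomial bound.

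It follows that \cref{linear} would be a consequence of the following linear strengthening of \cref{mccarty}: there is an absolute constant $C_0$ such that every $K_{s,s}$-free graph with average degree at least $C_0\max\{d,s\}$ has a bipartite $C_4$-free induced subgraph with average degree at least $d$. Granting this, suppose $G$ is $K_{s,s}$-free, excludes $H$ as a pivot-minor, and has average degree at least $C_0 c_H s$. Applying the strengthening with $d=c_H s$ yields a bipartite $C_4$-free induced subgraph of $G$ of average degree at least $c_H s$; the previous paragraph then yields a $k$-rank-connected bipartite $C_4$-free induced subgraph of $G$ of average degree at least $10 t^2 s + 1$, contradicting \cref{lem:struct}. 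Thus \cref{linear} holds with $C_H = C_0 c_H$.

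To prove the strengthening of \cref{mccarty}, I would start from the known proofs --- McCarty's \cite{mccarty2021dense} original argument and the quantitative refinements in \cite{girao2023induced,hunter2024k} --- and isolate the source of the super-linear loss. One first extracts a bipartite induced subgraph of comparable density at essentially no cost; the polynomial blow-up appears when the copies of $C_4$ are destroyed, using $K_{s,s}$-freeness through counting arguments or dependent random choice, together with the iteration required to reach a genuinely $C_4$-free subgraph. A linear bound would need this destruction carried out in a single weighting or entropy-compression step that charges each deleted edge to a bounded number of short witnesses. It is conceivable that the extra hypothesis available to us --- that $G$ also excludes a fixed bipartite graph as a pivot-minor --- can be exploited here, since the building blocks of the structure theorem, namely graphic fundamental graphs and their bipartite complements, are extremely sparse when $K_{s,s}$-free.

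I expect this last step to be the main obstacle: a McCarty-type theorem with a linear bound is open and is essentially the content of the conjecture of Du and McCarty \cite{du2024survey} for proper vertex-minor-closed classes, specialised to our situation, and no technique presently in the literature avoids a polynomial loss. Short of proving it in full, one could instead try to bypass \cref{mccarty} entirely, combining the binary matroid structure theorem with a direct argument that a dense $K_{s,s}$-free graph excluding $H$ as a pivot-minor already contains a dense, bipartite, $C_4$-free, rank-connected induced subgraph; even in this restricted class, though, separating ``dense'' from ``contains $K_{s,s}$'' with only a linear gap appears difficult.
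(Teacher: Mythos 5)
The statement you were asked about is Conjecture~\ref{linear}; the paper does not prove it, and neither does your proposal. What you have written is a (correct, and essentially faithful to the paper's proof of \cref{main}) reduction: you rerun the chain \cref{thm:bip_mc_str} $\to$ \cref{lem:struct} $\to$ \cref{mader}/\cref{lem:rankconnect1}, observe that each of these steps costs only $O_H(s)$, and conclude that the only super-linear loss comes from \cref{mccarty}. That bookkeeping is sound (modulo the fact that, like \cref{main} itself, everything is conditional on the announced structure theorem \cref{thm:mat_mc_str}). But the conclusion you then need --- that there is an absolute constant $C_0$ such that every $K_{s,s}$-free graph of average degree at least $C_0\max\{d,s\}$ contains a bipartite $C_4$-free induced subgraph of average degree at least $d$ --- is itself an open statement, and you say so explicitly. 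A reduction of one open conjecture to another open (and arguably stronger, since it is not restricted to pivot-minor-closed classes) statement is not a proof; the genuine gap is exactly the step you flag as ``the main obstacle.'' Note also that your proposed strengthening of \cref{mccarty} is not obviously true even in principle: the known refinements \cite{girao2023induced,hunter2024k} give polynomial bounds, and nothing in the literature rules out that the optimal $f(d,s)$ grows like a product of $d$ and $s$ rather than $\max\{d,s\}$, in which case your route would only recover a quadratic bound in $s$; and your identification of this strengthening with the Du--McCarty conjecture is loose, since their conjecture concerns vertex-minor-closed classes rather than all $K_{s,s}$-free graphs.

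So the honest summary is: you have correctly isolated where linearity is lost in the paper's argument, which is a useful observation and consistent with why the authors state \cref{linear} only as a conjecture, but you have not supplied the missing idea that would close it.
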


We remark that Hunter, Milojevi{\'c}, Sudakov, and Tomon \cite{hunter2025c_4} recently showed that degree-bounded classes with the density-Erdős-Hajnal property (see \cite{hunter2025c_4}) are linearly degree-bounded.
So, this could provide one approach to \Cref{linear}, and it is encouraging that these classes do have the related strong Erdős-Hajnal property \cite{davies2023pivot}.

\cref{circle} gave a tight upper bound for the minimum degree of a $K_{s,t}$-free bipartite circle graph.
However, the resulting average degree bound is not known to be tight.

\begin{problem}
    What is the maximum average degree of $K_{s,t}$-free (bipartite) circle graphs?
\end{problem}

Our proof of \cref{circle} relied on de Fraysseix's \cite{de1981local} characterization of bipartite circle graphs as fundamental graphs of planar graphs.
We remark that our proof only uses the fact that each bipartite circle graph is the  fundamental graph of a graph $G$. Perhaps exploiting the planarity of $G$ could lead to an improved bound for the average degree.

Finally, let us remark that we have encountered a natural notion for degree-boundedness for matroids.
We say that a class of matroids is \emph{degree-bounded} if their $K_{s,s}$-free fundamental graphs have bounded average degree.
With this notion, observe that \cref{lem:fun} states that graphic (and cographic) matroids are degree-bounded.
Binary matroids are not degree-bounded since each bipartite graph is the fundamental graph of some binary matroid.
However, since fundamental graphs of a proper minor-closed class of binary matroids are a proper pivot-minor-closed class of bipartite graphs, we obtain the following from \cref{main} (indeed we implicitly proved this in order to prove \cref{main}).

\begin{theorem}
    Every proper minor-closed class of binary matroids is degree-bounded.
\end{theorem}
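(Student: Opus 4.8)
The plan is to deduce the statement directly from \cref{main}, by showing that the fundamental graphs of the members of a proper minor-closed class of binary matroids form a family of bipartite graphs that uniformly excludes a single fixed bipartite graph as a pivot-minor.

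First I would fix a proper minor-closed class $\mathcal{M}$ of binary matroids and choose a binary matroid $N_0\notin\mathcal{M}$. Rather than work with $N_0$ itself, I would pass to $N\eqdef N_0\oplus N_0^*$. Since $N$ has $N_0$ as a minor and $N^*=N_0^*\oplus N_0$ also has $N_0$ as a minor, minor-closedness of $\mathcal{M}$ gives that neither $N$ nor $N^*$ lies in $\mathcal{M}$. Let $H$ be a fundamental graph of $N$ with respect to some basis; this is a bipartite graph. I would then record the elementary observation that any binary matroid having $H$ as a fundamental graph (with respect to some basis) must be $N$ or $N^*$: in a fundamental graph the basis is forced to be one of the two color classes, and the two resulting structure matrices are transposes of one another, hence give dual matroids; as one of them is $N$, the pair is exactly $\{N,N^*\}$.

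Next I would argue that no fundamental graph of a member of $\mathcal{M}$ has $H$ as a pivot-minor. Suppose $G$ is a fundamental graph of some $M\in\mathcal{M}$ and that $H$ is a pivot-minor of $G$. By the correspondence already established in \cref{sec:binary} (a pivot-minor of a fundamental graph of a binary matroid is a fundamental graph of a minor of that matroid), $H$ is then a fundamental graph of some minor $M'$ of $M$; minor-closedness of $\mathcal{M}$ gives $M'\in\mathcal{M}$, while the previous paragraph forces $M'\in\{N,N^*\}$, contradicting $N,N^*\notin\mathcal{M}$. Thus every fundamental graph of a member of $\mathcal{M}$ excludes the bipartite graph $H$ as a pivot-minor.

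Finally I would invoke \cref{main} for this $H$: there is a function $f_H$ such that for every positive integer $s$, every $K_{s,s}$-free graph excluding $H$ as a pivot-minor has average degree at most $f_H(s)$. In particular every $K_{s,s}$-free fundamental graph of a member of $\mathcal{M}$ has average degree at most $f_H(s)$, which is precisely the assertion that $\mathcal{M}$ is degree-bounded. The only point I expect to require genuine care is the one the direct-sum trick is designed to handle: one must prevent $H$ from re-entering the class as a pivot-minor via the \emph{dual} of an excluded matroid, since a proper minor-closed class of binary matroids need not be closed under duality; replacing $N_0$ by $N_0\oplus N_0^*$ makes both the chosen matroid and its dual excluded, and everything else is bookkeeping on top of results already proved in the paper.
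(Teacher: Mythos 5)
Your overall route is the same as the paper's (pass to fundamental graphs, use the pivot-minor/minor correspondence, and invoke \cref{main} for a single excluded bipartite graph), but the middle step is wrong: the ``elementary observation'' that any binary matroid having $H$ as a fundamental graph must be $N$ or $N^*$ only holds when $H$ is \emph{connected}, since only then is the bipartition of $H$ into two independent sets unique up to swapping sides. Your choice $N=N_0\oplus N_0^*$ forces $H$ to be disconnected: it is the disjoint union of a fundamental graph $H_0$ of $N_0$ and a fundamental graph of $N_0^*$, and these are the \emph{same} abstract graph with the sides exchanged, so $H\cong H_0\sqcup H_0$. For a disconnected $H$ one may choose, independently in each component, which side plays the role of the basis, so the binary matroids with fundamental graph isomorphic to $H$ include $N_0\oplus N_0$ and $N_0^*\oplus N_0^*$ as well as $N\cong N^*$. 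This is not a cosmetic issue: take $\mathcal{M}$ to be the proper minor-closed class of cographic binary matroids and $N_0=M(K_5)$. Then $N_0^*\oplus N_0^*$ is cographic, hence in $\mathcal{M}$, and its fundamental graph (with respect to the natural basis) is isomorphic to your $H$; so $H$ \emph{is} a pivot-minor of a fundamental graph of a member of $\mathcal{M}$, the minor $M'$ you produce need not lie in $\{N,N^*\}$, and no contradiction is reached. In short, your chosen $H$ fails to witness that the fundamental graphs of $\mathcal{M}$ form a \emph{proper} pivot-minor-closed class, which is exactly the point the paper leaves implicit and that your argument was meant to supply.

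The repair is to make $H$ connected. Choose a connected binary matroid $N$ having both $N_0$ and $N_0^*$ as minors; for instance, any connected binary matroid with $N_0\oplus N_0^*$ as a minor, such as a sufficiently large projective geometry $PG(k,2)$ (every binary matroid is a minor of some $PG(k,2)$, and these are connected). Then $N$ and $N^*$ both contain $N_0$ as a minor, hence lie outside $\mathcal{M}$; moreover a matroid is connected if and only if its fundamental graphs are connected, so any fundamental graph $H$ of $N$ is a connected bipartite graph, its bipartition into independent sets is unique up to swapping the two sides, and the only binary matroids with $H$ as a fundamental graph are $N$ and $N^*$. With this $H$ the rest of your argument — the correspondence between pivot-minors of fundamental graphs and fundamental graphs of minors, followed by \cref{main} — goes through verbatim and matches the paper's intended one-line deduction.
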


It could be interesting to further investigate degree-boundedness for matroids.

\begin{remark}
    \cref{odd} has been proven very recently by Zach Hunter \cite{hunter2025odd}.
    Thus, \cref{main} holds independently of \cref{thm:mat_mc_str}.
\end{remark}

\section*{Acknowledgements}

This work was done at the 2023 Vertex-Minor Workshop hosted by the Institute for Basic Science in South Korea.
We are grateful to the organizers and participants for providing an excellent research environment.
This work was supported by the Institute for Basic Science (IBS-R029-C1).
We thank the referees for helpful comments and in particular for pointing out an error with \cref{lem:rankconnect1} in a previous version of this paper.

\bibliographystyle{amsplain}
\bibliography{pivot}	

@article{du2024survey,
  title={A survey of degree-boundedness},
  author={Du, Xiying and McCarty, Rose},
  journal={European Journal of Combinatorics},
  pages={104092},
  year={2024},
  publisher={Elsevier}
}

@article{bourneuf2023polynomial,
  title={On polynomial degree-boundedness},
  author={Bourneuf, Romain and Buci{\'c}, Matija and Cook, Linda and Davies, James},
  journal={Advances in Combinatorics},
	volume={2024},
	pages={5, 16 pp.},
	year={2024},
	doi={10.19086/aic.2024.5},
}

@article{hunter2024k,
  title={K{\H{o}}v{\'a}ri-{S}{\'o}s-{T}ur{\'a}n theorem for hereditary families},
  author={Hunter, Zach and Milojevi{\'c}, Aleksa and Sudakov, Benny and Tomon, Istv{\'a}n},
  journal={Journal of Combinatorial Theory, Series B},
  volume={172},
  pages={168--197},
  year={2025},
  publisher={Elsevier}
}

@article{kuhn2004induced,
  title={Induced subdivisions in ${K_{s,s}}$-free graphs of large average degree},
  author={Kuhn, Daniela and Osthus, Deryk},
  journal={Combinatorica},
  volume={24},
  number={2},
  pages={287--304},
  year={2004},
  publisher={Springer-Verlag GmbH}
}

@article{girao2023induced,
  title={Induced subdivisions in ${K_{s,s}}$-free graphs with polynomial average degree},
  author={Gir{\~a}o, Ant{\'o}nio and Hunter, Zach},
  journal={International Mathematics Research Notices},
  volume={2025},
  number={4},
  pages={rnaf025},
  year={2025},
  publisher={Oxford University Press}
}

@article{du2023induced,
  title={Induced {$C_4$}-free subgraphs with large average degree},
  author={Du, Xiying and Gir{\~a}o, Ant{\'o}nio and Hunter, Zach and McCarty, Rose and Scott, Alex},
  journal={Journal of Combinatorial Theory, Series B},
  volume={173},
  pages={305--328},
  year={2025},
  publisher={Elsevier}
}

@article{davies2023pivot,
  title={Pivot-minors and the {E}rd{\H{o}}s-{H}ajnal conjecture},
  author={Davies, James},
  journal={Journal of Combinatorial Theory, Series B},
  volume={173},
  pages={257--278},
  year={2025},
  publisher={Elsevier}
}

@article{de1981local,
  title={Local complementation and interlacement graphs},
  author={De Fraysseix, Hubert},
  journal={Discrete Mathematics},
  volume={33},
  number={1},
  pages={29--35},
  year={1981},
  publisher={Elsevier}
}

@article{geelen2009circle,
  title={Circle graph obstructions under pivoting},
  author={Geelen, Jim and Oum, Sang-il},
  journal={Journal of Graph Theory},
  volume={61},
  number={1},
  pages={1--11},
  year={2009},
  publisher={Wiley Online Library}
}

@article{geelen2015highly,
  title={The highly connected matroids in minor-closed classes},
  author={Geelen, Jim and Gerards, Bert and Whittle, Geoff},
  journal={Annals of Combinatorics},
  volume={19},
  number={1},
  pages={107--123},
  year={2015},
  publisher={Springer}
}

@article{kim2024vertex,
  title={Vertex-minors of graphs: A survey},
  author={Kim, Donggyu and Oum, Sang-il},
  journal={Discrete Applied Mathematics},
  volume={351},
  pages={54--73},
  year={2024},
  publisher={Elsevier}
}

@inproceedings{mader1972existenz,
  title={Existenz n-fach zusammenh{\"a}ngender Teilgraphen in Graphen gen{\"u}gend grosser Kantendichte},
  author={Mader, Wolfgang},
  booktitle={Abhandlungen aus dem mathematischen Seminar der Universit{\"a}t Hamburg},
  volume={37},
  pages={86--97},
  year={1972},
  organization={Springer}
}

@article{mccarty2021dense,
  title={Dense induced subgraphs of dense bipartite graphs},
  author={McCarty, Rose},
  journal={SIAM Journal on Discrete Mathematics},
  volume={35},
  number={2},
  pages={661--667},
  year={2021},
  publisher={SIAM}
}

@article{gyarfas1980induced,
  title={Induced subtrees in graphs of large chromatic number},
  author={Gy{\'a}rf{\'a}s, Andr{\'a}s and Szemeredi, Endre and Tuza, Zs},
  journal={Discrete Mathematics},
  volume={30},
  number={3},
  pages={235--244},
  year={1980},
  publisher={Elsevier}
}

@article{kierstead1996applications,
  title={Applications of hypergraph coloring to coloring graphs not inducing certain trees},
  author={Kierstead, Hal A and Rodl, V},
  journal={Discrete Mathematics},
  volume={150},
  number={1-3},
  pages={187--193},
  year={1996},
  publisher={Elsevier}
}

@article{oum2009excluding,
  title={Excluding a bipartite circle graph from line graphs},
  author={Oum, Sang-il},
  journal={Journal of Graph Theory},
  volume={60},
  number={3},
  pages={183--203},
  year={2009},
  publisher={Wiley Online Library}
}

@article{WIMatroid,
    author={Oxley, James},
    title ={What is a Matroid?},
    journal={https://www.math.lsu.edu/\string~oxley/survey4.pdf}
}

@book{Oxley,
    AUTHOR = {Oxley, James},
     TITLE = {Matroid theory},
    SERIES = {Oxford Graduate Texts in Mathematics},
    VOLUME = {21},
   EDITION = {Second},
 PUBLISHER = {Oxford University Press, Oxford},
      YEAR = {2011},
     PAGES = {xiv+684},
      ISBN = {978-0-19-960339-8},
   MRCLASS = {05-01 (05B35 90C27)},
MRREVIEWER = {Maruti\ M.\ Shikare},
       DOI = {10.1093/acprof:oso/9780198566946.001.0001},
       URL = {https://doi.org/10.1093/acprof:oso/9780198566946.001.0001},
}

@article{scott2023polynomial,
  title={Polynomial bounds for chromatic number. {I}. {E}xcluding a biclique and an induced tree},
  author={Scott, Alex and Seymour, Paul and Spirkl, Sophie},
  journal={Journal of Graph Theory},
  volume={102},
  number={3},
  pages={458--471},
  year={2023},
  publisher={Wiley Online Library}
}

@article{fox2010separator,
  title={A separator theorem for string graphs and its applications},
  author={Fox, Jacob and Pach, J{\'a}nos},
  journal={Combinatorics, Probability and Computing},
  volume={19},
  number={3},
  pages={371--390},
  year={2010},
  publisher={Cambridge University Press}
}

@article{scott2020survey,
  title={A survey of $\chi$-boundedness},
  author={Scott, Alex and Seymour, Paul},
  journal={Journal of Graph Theory},
  volume={95},
  number={3},
  pages={473--504},
  year={2020},
  publisher={Wiley Online Library}
}

@article{hunter2025odd,
  title={Induced subdivisions of certain length},
  author={Hunter, Zach},
  journal={(In Preparation)}
}

@article{char2025edge,
  title={Edge-colouring and orientations: applications to degree-boundedness and $\chi$-boundedness},
  author={Char, Arnab and Kawarabayashi, Ken-ichi and Picasarri-Arrieta, Lucas},
  journal={arXiv preprint arXiv:2506.23054},
  year={2025}
}

@article{hunter2025c_4,
  title={${C}_4$-free subgraphs of high degree with geometric applications},
  author={Hunter, Zach and Milojevi{\'c}, Aleksa and Sudakov, Benny and Tomon, Istvan},
  journal={arXiv preprint arXiv:2506.23942},
  year={2025}
}

@article{tutte1959matroids,
  title={Matroids and graphs},
  author={Tutte, William Thomas},
  journal={Transactions of the American Mathematical Society},
  volume={90},
  number={3},
  pages={527--552},
  year={1959},
  publisher={JSTOR}
}

@article{grace2018perturbations,
  title={On perturbations of highly connected dyadic matroids},
  author={Grace, Kevin and Van Zwam, Stefan HM},
  journal={Annals of Combinatorics},
  volume={22},
  number={3},
  pages={513--542},
  year={2018},
  publisher={Springer}
}

@article{oum2005rank,
  title={Rank-width and vertex-minors},
  author={Oum, Sang-il},
  journal={Journal of Combinatorial Theory, Series B},
  volume={95},
  number={1},
  pages={79--100},
  year={2005},
  publisher={Elsevier}
}

@inproceedings{bouchet1989connectivity,
  title={Connectivity of isotropic systems},
  author={Bouchet, Andr{\'e}},
  booktitle={Proceedings of the third international conference on Combinatorial mathematics},
  pages={81--93},
  year={1989}
}

@article{bouchet1988graphic,
  title={Graphic presentations of isotropic systems},
  author={Bouchet, Andr{\'e}},
  journal={Journal of Combinatorial Theory, Series B},
  volume={45},
  number={1},
  pages={58--76},
  year={1988},
  publisher={Elsevier}
}

\end{document}